\title{Thompson's group $T$ has quadratic Dehn function}
\author{Matteo Migliorini}
\subjclass{20F65 (20F05, 20F69)}
\begin{document}

\NewDocumentCommand{\tree}{o}{\IfNoValueTF{#1}{\mathcal T}{\mathcal T^{(#1)}}}
\NewDocumentCommand{\Fn}{o}{\IfNoValueTF{#1}{F_n}{F_n^{(#1)}}}

\begin{abstract}
	We prove that Thompson's group $T$ and, more generally, all the Higman--Thompson groups $T_n$ have quadratic Dehn function.
\end{abstract}

\maketitle

\section{Introduction}

In 1965 Richard Thompson defined the three groups $ F < T < V $ that are now known as Thompson's groups.
They have been studied extensively in the last fifty years, as they arise naturally in many different areas of mathematics (geometric group theory, homotopy theory, to name a few).
For a historical on Thompson's groups, we refer to the excellent survey by Cannon, Floyd, and Parry \cite{CannonFloydParry}.

Different points of view mean that Thompson's groups can be defined in many different ways. For example, one can define them combinatorially, as groups of certain partial automorphisms of a binary tree, or dynamically, as subgroups of homeomorphisms (with possibly some discontinuities, in the case of $V$) from the circle to itself.

Despite receiving a lot of attention, many questions about these groups are still open, the most famous being arguably whether $F$ is amenable.

Being finitely presented groups, an interesting problem is computing their Dehn function. Indeed, the Dehn function is a natural quasi-isometric invariant associated to finitely presented groups; geometrically, it can be seen as the isoperimetric function associated to the presentation complex of the group.

Equivalently, one can also interpret the Dehn function combinatorially as the number of relations one needs to apply to reduce a word of length $n$ representing the identity to the empty word. In this sense, it is also related to the complexity of the word problem; for example, the latter is decidable if and only if the Dehn function is computable.

Note that the Dehn function of a finitely presented group is always either linear or at least quadratic; having linear Dehn function is an equivalent definition of the notion of hyperbolicity, introduced by Gromov in 1987.

The Dehn function of Thompson's group $F$ was proven by Guba \cite{Guba} to be quadratic. While the lower bound follows easily from the fact that $F$ contains $ \ZZ^2 $ as a subgroup and is therefore not hyperbolic, the upper bound required a lot of work.
Gersten first managed to prove an exponential upper bound \cite{GerstenExponential}, that was brought down to sub-exponential \cite{GubaSapir}, then to $n^5$ \cite{GubaPolynomialF}, and finally to quadratic \cite{Guba}.

Thompson's groups $T$ and $V$ are also not hyperbolic, so their Dehn function is at least quadratic. In fact, both are conjectured to have quadratic Dehn function (see for example \cite{ZaremskyProblems}).
It was proven by Guba \cite{GubaPolynomialFTV} that the Dehn functions of $T$ and $V$ have polynomial upper bounds of $ n^7 $ and $ n^{11} $ respectively. The upper bound for $T$ was later improved to $n^5$ \cite{WangZhengZhang}.

In this paper, we prove the conjecture for Thompson's group $T$.

\begin{maintheorem}\label{t-dehn-quadratic}
	The Dehn function of Thompson's group $T$ is quadratic.
\end{maintheorem}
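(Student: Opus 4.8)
The plan is to prove the upper bound $\Dehn{T}(n) \asympleq n^2$, the matching lower bound $\Dehn{T}(n) \asympgeq n^2$ being already known from the fact that $T$ contains $\ZZ^2$ and is therefore not hyperbolic. I would fix a finite presentation $\presentation{S}{R}$ of $T$ and establish the area bound through the following well-known mechanism. Suppose that to every $g \in T$ one can associate a normal-form word $\nu(g)$ over $S^{\pm 1}$, with $\nu(1)$ the empty word, such that (i) $|\nu(gs)| \le |\nu(g)| + O(1)$ for every $g$ and every generator $s \in S$, and (ii) the loop $\nu(g)\, s\, \inv{\nu(gs)}$ bounds a van Kampen diagram of area $O(|\nu(g)|)$. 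Then, for a word $w = g_1 g_2 \cdots g_n$ representing the identity, telescoping the transition loops along the partial products $p_k = g_1 \cdots g_k$ (where $p_0 = p_n = 1$, so $\nu(p_0)$ and $\nu(p_n)$ are empty) exhibits a filling of $w$ of area $\sum_{k=1}^n O(|\nu(p_{k-1})|) = \sum_k O(k) = O(n^2)$. The whole problem is thereby reduced to producing such a normal form together with the two linear estimates.

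For the normal form I would use the tree-pair model. Recall that an element of $T$ is represented by a reduced pair of finite binary trees $(T_-, T_+)$ with the same number of leaves together with a cyclic shift matching the leaves of $T_-$ to those of $T_+$ in a cyclic-order-preserving way; for the Higman--Thompson groups $T_n$ one uses $n$-ary trees instead, and the argument below is meant to be uniform in the arity $n$. Reading off a canonical word $\nu(g)$ from the reduced tree pair --- say via the standard generators of $F < T$ together with the rotation element $c$ --- one checks that each generator multiplies the represented element by a map whose reduced tree pair has at most $O(1)$ more carets. This yields estimate (i): the number of carets of $p_k$, and hence $|\nu(p_k)|$, grows at most linearly in $k$.

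The heart of the matter is estimate (ii): after right-multiplying $g$ by a single generator $s$, one must re-normalize the resulting (generally unreduced) tree pair back to reduced form at a cost, in relations, that is linear in the current number of carets. For the generators lying in $F$ this is essentially the content of the quadratic filling for Thompson's group $F$ proved by Guba (and, in the general setting, for its analogue $\Fn$), which I would either invoke as a black box or, better, redevelop as caret-cancellation estimates directly in the forest/tree calculus. The genuinely new difficulty --- and the step I expect to be the main obstacle --- is the interaction of the rotation element $c$ with the reduction process: multiplying by $c$ permutes the leaves cyclically, so caret cancellations can now occur \emph{across the basepoint}, a phenomenon absent in $F$, where no cyclic wraparound is possible. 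Controlling these wraparound cancellations, and showing that transporting the distinguished leaf around the circle can be realized with only linearly many relation applications, is exactly where the cyclic structure must be tamed.

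Granting the two linear estimates, the telescoping argument of the first paragraph delivers the quadratic upper bound, and since every construction is carried out uniformly in the arity, it applies verbatim to all the Higman--Thompson groups $T_n$. The only place where real care is required is the bookkeeping of the cyclic shift in estimate (ii); once the linear re-normalization cost is secured there, \cref{t-dehn-quadratic} follows formally.
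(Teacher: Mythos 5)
There are two genuine gaps here, one in the reduction mechanism you set up and one in the step you yourself flag as the main obstacle.

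First, the telescoping argument you describe requires the transition loops $\nu(g)\, s\, \nu(gs)^{-1}$ to have area $O(|\nu(g)|)$ --- \emph{linear} in the length of the normal form. But the input you propose to use for the generators of $F$, namely Guba's theorem that $\Dehn{F}$ is quadratic, only tells you that a null-homotopic word of length $\ell$ has area $O(\ell^2)$; applied to a transition loop of length $O(|\nu(p_k)|)=O(k)$ this gives area $O(k^2)$, and the telescoped sum becomes $\sum_k O(k^2)=O(n^3)$, a cubic bound. A normal form for $F$ with genuinely linear-area transitions would be a combing-type statement strictly stronger than $\Dehn F\asymp n^2$, and you give no argument for it. This is precisely why the paper does not telescope over single generators: it uses the weaker triangle criterion (\cref{triangles}), which only asks that null-homotopic concatenations of \emph{three} normal forms be fillable with area quadratic in their total length. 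That weaker hypothesis is exactly what one can extract from Guba's theorem as a black box, via a divide-and-conquer recursion $A(n)\leq 2A(n/2)+O(n^2)$ rather than a sum of $n$ linear terms.

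Second, the interaction of the rotation with the normal form is not an implementation detail to be ``tamed'' later --- it is the entire content of the theorem, and your proposal contains no mechanism for it. The paper's resolution is structural and dynamical rather than a caret-cancellation bookkeeping: one first proves that every $f\in T$ is represented, with linearly bounded length, by a word containing \emph{at most one} occurrence of the rotation $C_0$ (\cref{nice-t}, which rests on moving $f^{-1}(0)$ to $\tfrac12$ inside $F$ at linear cost, \cref{estimate-word-length-by-image}), and that every $f\in F$ has a representative with at most one occurrence of $X_0$ (\cref{bound-nice-f}). A triangle of such normal forms then contains at most three $C_0$'s; since $C_0$ normalizes $F[0,\tfrac12]\times F[\tfrac12,1]$ and conjugates its chosen generators to one another ($C_0X_i=Y_iC_0$ for $i\geq1$, relations \cref{relation-shuffle-one,relation-shuffle-two}), the $C_0$'s can be commuted past almost every letter at linear cost and cancelled using $C_0^2=1$ and $(C_0X_0)^3=1$, after which one is left with a word in $S_F$ and invokes Guba. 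Nothing in your tree-pair re-normalization sketch substitutes for this; as written, the ``wraparound cancellation'' step is an acknowledged hole, and even if filled with a quadratic (rather than linear) estimate it would not rescue the telescoping framework for the reason above.
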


The proof relies on the fact that $F$ has quadratic Dehn function.

If in the combinatorial definition of Thompson's groups one replaces the binary tree with an $n$-ary tree, one gets three families of groups $ F_n, T_n, V_n $. The family $V_n$ was introduced by Higman \cite{Higman}, while later Brown \cite{BrownFiniteness} naturally extended the definition to $F_n$ and $T_n$; these groups are sometimes referred to as \emph{Higman--Thompson groups}.

The same techniques used for proving \cref{t-dehn-quadratic} apply also for the family $ T_n $.

\begin{maintheorem}\label{tn-dehn-quadratic}
	The Dehn function of Higman--Thompson group $T_n$ is quadratic.
\end{maintheorem}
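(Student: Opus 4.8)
The plan is to reprove \cref{t-dehn-quadratic} verbatim, with the binary tree replaced by the $n$-ary tree, the group $F = F_2$ replaced by $F_n$, and $T = T_2$ replaced by $T_n$. The point is that the argument for $T$ is entirely combinatorial: it is carried out in terms of reduced tree-pair diagrams and the interaction of $T$ with its subgroup $F$, and nowhere does it use that carets are binary. The branching number $n$ appears only as a parameter controlling the size of the finite generating set and the number of defining relators; it therefore affects the multiplicative constants in the isoperimetric estimate but not its quadratic order.

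The single external ingredient that must be upgraded is the input hypothesis. The proof of \cref{t-dehn-quadratic} takes as its starting point that $F$ has quadratic Dehn function; what is needed here is the analogous statement for $F_n$. This is supplied by the same combinatorial filling that Guba uses for $F = F_2$, which is itself insensitive to the arity. Granting it, I would then recall the model of $T_n$ as reduced $n$-ary tree-pair diagrams carrying a cyclic marking of the leaves, fix the presentation used for $T$ with all data now indexed by $n$, and re-examine the structural lemmas one at a time. The facts that $F_n$ is the stabilizer of the basepoint in $T_n$, that the cyclic (rotational) part of a diagram can be normalized and commuted past its $F_n$-part at controlled cost, and the estimates governing how each defining relator changes tree size, all go through with $n$ in place of $2$ and with word-for-word identical proofs.

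The main obstacle is bookkeeping rather than mathematics. One must check that every numerical estimate in the filling argument --- the areas of the auxiliary van Kampen diagrams, and the count of relator applications needed to bring a length-$\ell$ word to normal form --- is bounded by a quadratic in $\ell$ with a constant that may depend on $n$ but is independent of $\ell$. Since the proof of \cref{t-dehn-quadratic} already isolates these quantities, confirming their uniformity in $\ell$ is routine, and the quadratic upper bound follows exactly as before. The matching lower bound needs no new work: $F_n$, and hence $T_n$, contains $\ZZ^2$, so $T_n$ is not hyperbolic and its Dehn function is at least quadratic.
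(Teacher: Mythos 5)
Your overall strategy --- generalize the proof of \cref{t-dehn-quadratic} by replacing binary trees with $n$-ary ones, feed in the quadratic Dehn function of $F_n$, and reduce triangular loops to fillings in $F_n$ by commuting the rotational letter past the $F_n$-part --- is exactly the strategy of the paper. However, your claim that the lemmas go through ``with word-for-word identical proofs'' and that the only issue is bookkeeping of constants misses the one genuinely new phenomenon for $n>2$: the action of $F_n$ on the $n$-adic rationals in $(0,1)$ is \emph{not transitive}. An element of $F_n$ can carry $x=\frac{a}{n^k}$ to $y=\frac{b}{n^\ell}$ only if $a\equiv b\pmod{n-1}$ (this is \cref{orbit-fn} in the paper); for $n=2$ the condition is vacuous, which is why it is invisible in the proof for $T$. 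As a consequence, the direct analogue of the key normalization step fails: given $f\in T_n$, there need not exist any $f_1\in F_n$ sending $f^{-1}(0)$ to a fixed standard point such as $\frac12$ (or $\frac1n$). The paper repairs this by classifying orbits modulo $n-1$, enriching the normal form for $F_n$ so that each of the $n-1$ glides $\gamma_k$ appears at most once (\cref{normal-form-fn}), and enriching the normal form for $T_n$ to $w(S_{F_n})\cdot\rho^{-k}\cdot\gamma\cdot v(S_m)$, where the exponent $k$ is dictated by a congruence and an extra glide $\gamma$ is needed to reach the point $\frac kn$ from inside the subinterval $[\frac an,\frac{a+1}n]$ (\cref{normal-form-tn}). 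The commuting step then also changes: one must check that the letters being pushed past $\rho^{k}$ all fix the relevant point $\frac kn$, and the bounded-length word $\rho^{k_1}\gamma\rho^{k_2}$ must itself be renormalized, playing the role of the identity $C_0X_0C_0=X_0^{-1}C_0X_0^{-1}$. None of this is fatal to your plan, but it is a missing idea, not bookkeeping: executed as written, your argument stalls at the first normalization of $f^{-1}(0)$.

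A secondary point: you assert that the quadratic Dehn function of $F_n$ follows because Guba's filling for $F$ \cite{Guba} is ``insensitive to the arity.'' That is a nontrivial claim that you neither prove nor reference; the paper instead invokes the theorem of Zhang \cite{Zhang} that $F_n$ has quadratic Dehn function. You should cite that result rather than assert that the $n=2$ proof transfers. Your lower bound via $\ZZ^2\leq F_n\leq T_n$ is fine.
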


Here we exploit the fact that by a result of Zhang \cite{Zhang} the groups $F_n$ have quadratic Dehn function.

\subsection{Strategy of the proof}

Thompson's groups $F$ and $T$ both have a finite and an infinite generating set that are well-established in the literature; the advantage of the infinite one is that it admits a more symmetric (infinite) presentation.

Both groups admit normal forms, sometimes called $ pq $ and $ pcq $ factorization (see \cite[Corollary 2.7, Theorem 5.7]{CannonFloydParry}), both of which use the infinite presentation; Guba used this normal form to prove that $F$ has quadratic Dehn function. For $T$ the situation is slightly more complicated: if we rewrite the normal form of an element of $T$ in terms of a finite generating set, the resulting word may be much longer than the actual norm of the element with respect to the word metric.
Therefore, we introduce two different normal forms for $F$ and $T$ that use finite generating sets.

We look at Thompson's groups from the dynamical point of view, so $T$ is viewed as a certain subgroup of orientation-preserving self-homeomorphisms of the circle. If we parametrize the circle as the interval $[0,1]$ with the endpoints identified, then $F$ can be described as the subgroup of elements of $T$ that fix the point $0 \equiv 1$. By considering the subgroup of elements that fix both $0$ and $\frac12$, we get a subgroup $H < F$ that is naturally isomorphic to $ F \times F $.

We choose a finite generating set for $H$. From this we construct a finite generating set for $F$ by adding an element $X_0 \in F$ that does not fix $ \frac 12 $. Similarly, from the generating set for $F$ we construct a generating set for $T$ by adding the rotation of the circle $C_0 \in T$ of order two.
By studying the dynamics of these groups $ H < F < T $ on the circle, we can prove that every element of $F$ is represented by a word with at most one instance of $X_0$, and every element of $T$ is represented by a word with at most one instance of $C_0$.
Such words are our chosen normal forms for $F$ and $T$; these are not unique, but this is not an issue for our purposes.

By a standard argument, it suffices to show that all triangular loops in the Cayley graph where the edges are normal forms have quadratic area. To perform this computation we exploit the fact that in a triangular loop almost all the letters are generators of $H$. Since $C_0$ stabilizes $ \set{0, \frac12} $, it normalizes $ H $; in fact, the generating set chosen for $H$ is invariant under conjugation by $C_0$. So we can commute the letter $C_0$ with every generator of $H$, by replacing the latter with its conjugate. This allows us to eliminate all instances of $C_0$ in the word, and then we conclude using that the Dehn function of $F$ is quadratic.

For \cref{tn-dehn-quadratic} the strategy is similar, except now we consider the subgroup of $F_n$ that fixes all the points $ \frac 1n, \dots, \frac{n-1}n $, and we work with the rotation of order $n$.

Note that one can prove \cref{tn-dehn-quadratic} directly without proving the particular case for Thompson's group $ T = T_2 $. However, we choose to first give the proof for \cref{t-dehn-quadratic} as the reader might be more familiar with Thompson's groups $F$ and $T$ than their generalizations, and the former have more established conventions for their generating set. This makes the proof of \cref{t-dehn-quadratic} more explicit and hopefully easier to read, and in turn it helps to understand the proof of \cref{tn-dehn-quadratic}.

\subsection{Structure of the paper}

We start in \cref{section-preliminaries} by recalling the definition of Dehn function, and we introduce Thompson's group $F$ and $T$, together with their generating set, presentation, and some useful properties. In \cref{section-f} we show that $F$ admits a normal form with at most one instance of the generator $X_0$, and
in \cref{section-t} we introduce the normal form for $T$, that we use to prove \cref{t-dehn-quadratic}.
In \cref{section-tn}, we generalize all the arguments in the previous sections to prove \cref{tn-dehn-quadratic}.
We conclude with some remarks in \cref{section-remarks}.

\subsection{Acknowledgments}
We would like to thank Claudio Llosa Isenrich for pointing us to this question, and for useful discussions. The author gratefully acknowledges funding by the DFG 281869850 (RTG 2229), and would like to thank INdAM as a member of GNSAGA.

\section{Preliminaries}\label{section-preliminaries}

We start by fixing some notation. Let $G$ be a finitely generated group, and $S=\set{s_1, \dots, s_k}$ be a finite subset of $G$. If $w$ is a word in the alphabet $S \cup S^{-1}$, then we write $ w(S) $ or $ w(s_1, \dots, s_k) $ to make the alphabet explicit. We denote the length of $w$ by $ \abs w $. If $g \in G$, and $S$ is a generating set, we denote by $ \norm{g}_S $ the minimal length of a word representing $g$ (that is, the distance of $g$ from the identity with respect to the word metric). If the generating set is clear, we also write $ \norm g_G $.

We say that a word $w(S)$ is null-homotopic if it represents the trivial element of $G$. If $G$ has a finite presentation $ \presentation S{\mathcal R} $, and $w$ is a null-homotopic word, then $w$ can be written as a product of conjugates of $k$ relations, for some $ k \in \NN $; we call the minimum such $k$ the \emph{area} of $w$, and denote it by $ \Area(w) $.

The Dehn function of $G$ encodes the growth of the area in terms of the length of the word. More precisely, it is a function $ \Dehn G \colon \NN \to \NN $ defined by
\[
	\Dehn G (N) = \max \set{\Area(w) : w \text{ null-homotopic},\,\abs{w} \leq N}.
\]

The Dehn function as defined above depends on the chosen presentation for $G$; to solve this problem, one introduces the following equivalence relation.

If $ f,g \colon \NN \to \NN $, we say that $ f \asympleq g $ if there exists $ C>0 $ such that for all $ N \in \NN  $ we have $  f(N) \leq C g(CN+N) + CN + C  $. If $ f \asympleq g \asympleq f $ we write $ f \asymp g $; different presentations (and more in general quasi-isometric groups) have equivalent Dehn functions.
A group $G$ has quadratic Dehn function if $ \Dehn G \asymp (N \mapsto N^2) $.

Throughout the paper, we identify the circle with the interval $[0,1]$ with the endpoints glued together. Thompson's group $T$ is the group of self-homeomorphisms of the circle that send dyadic rationals to dyadic rationals, are linear except on finitely many dyadic rationals, and the derivative is a power of $2$ (with integer exponent) wherever it is defined.
The subgroup of $T$ consisting of the homeomorphisms that fix the point $0$ is Thompson's group $F$. We refer to \cite{CannonFloydParry} for an introduction on these groups.

Let $ X_0, X_1, \dots $ denote the standard generators of Thompson's group $F$. They are defined as follows:
\[
	X_k(x) = \begin{cases}
		x                                           & \text{if } 0 \leq x \leq 1-\frac1{2^k},                     \\
		\frac12 \cdot x + \frac12 - \frac1{2^{k+1}} & \text{if } 1-\frac1{2^k}\leq x \leq 1-\frac{1}{2^{k+1}},    \\
		x-\frac1{2^{k+2}}                           & \text{if } 1-\frac1{2^{k+1}} \leq x \leq 1-\frac1{2^{k+2}}, \\
		2x-1                                        & \text{if } 1-\frac1{2^{k+2}} \leq x \leq 1.
	\end{cases}
\]
Thompson's group $F$ admits both an infinite presentation involving all the $X_i$, with $i \in \NN$, and a finite one with two generators $ X_0, X_1 $, given by
\[
	\presentation{X_0,X_1}{
	[X_0X_1^{-1}, X_0^{-1}X_1X_0],
	[X_0X_1^{-1}, X_0^{-2}X_1X_0^2]
	}
\]

In particular, all the other generators can be expressed in terms of the $X_i$ by the formula $ X_{i+1} = X_0^{-i} X_1 X_0^i $.

To obtain a generating set for $T$ we need to add another homeomorphism that does not fix $0$. To this end, let $C_0$ be the homeomorphism defined by
\[
	C_0(x) = \begin{cases}
		x+\frac12 & \text{if } 0 \leq x \leq \frac12  \\
		x-\frac12 & \text{if } \frac12 \leq x \leq 1. \\
	\end{cases}
\]
In other words, $C_0$ is the rotation of the circle of order two.

\begin{remark}
	One can also define elements $C_i$ of order $ i+2 $ for all $ i \geq 0 $, as done in \cite{CannonFloydParry}, and get an infinite presentation. To obtain a finite generating set, it suffices to add only one of the $C_i$ to the generators of $F$, and the usual choice is to use $C_1$ instead of $C_0$. To translate between the presentations found in the literature and the one we use here, it is helpful to keep in mind that $ C_1 = X_0^{-1} C_0 $.
\end{remark}

We also define $ Y_i := C_0 X_i C_0 $ for $ i\geq 1 $. Since $X_i$ has $ \frac12 $ as a fixed point for $ i \geq 1 $, it follows that $ Y_i \in F $.
In this paper, we choose $ S_F := \set{X_0^{\pm 1}, X_1^{\pm 1}, X_2^{\pm 1}, Y_1^{\pm 1}, Y_2^{\pm 1}} $ as a generating set for $F$, and $S_T:= S_F \cup \set{C_0}$ as a generating set for $T$. In \cref{figure-rectangular-diagrams} we have drawn the rectangular diagrams for the generators: the top border represents the domain, while the bottom border represents the codomain (see \cite{CannonFloydParry} for more details).

\begin{figure}
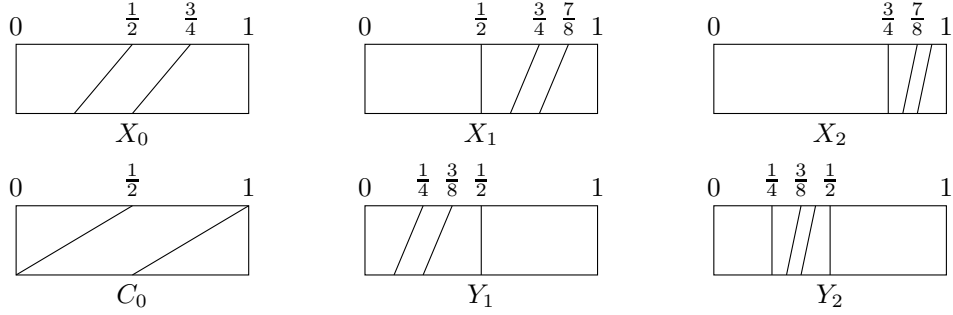

	\includestandalone[width=\textwidth]{pictures/X0}
	\caption{Rectangular diagrams for the generators $ S_T $.}
	\label{figure-rectangular-diagrams}
\end{figure}

\begin{lemma}
	Thompson's group $F$ admits the following presentation:
	\[
		\presentation{S_F}{
		[Y_1, X_1],
		[Y_1, X_2],
		X_0 X_2=X_1 X_0,
		Y_1 = X_0^2 X_1^{-1} X_0^{-1},
		Y_{2} = X_0 X_1^2 X_2^{-1} X_1^{-1} X_0^{-1}
		}
	\]
\end{lemma}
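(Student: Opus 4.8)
The plan is to obtain this presentation from the standard two-generator presentation of $F$ recalled above by a sequence of Tietze transformations. Write $R_1 = [X_0X_1^{-1}, X_0^{-1}X_1X_0]$ and $R_2 = [X_0X_1^{-1}, X_0^{-2}X_1X_0^2]$ for its two relators. First I would adjoin $X_2$, $Y_1$, and $Y_2$ as new generators together with the three defining relations $X_0X_2 = X_1X_0$, $Y_1 = X_0^2X_1^{-1}X_0^{-1}$, and $Y_2 = X_0X_1^2X_2^{-1}X_1^{-1}X_0^{-1}$. The first of these reads $X_2 = X_0^{-1}X_1X_0$, in agreement with the formula $X_{i+1} = X_0^{-i}X_1X_0^i$, so each new generator is expressed as a word in $X_0$ and $X_1$; adjoining a generator together with such a defining relation is a Tietze transformation and does not change the group. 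This produces a presentation of $F$ on the five generators of $S_F$ whose relators are $R_1$, $R_2$, and the three defining relations.

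It then remains to replace $R_1$ and $R_2$ by the two commutators $[Y_1,X_1]$ and $[Y_1,X_2]$. Using $X_2 = X_0^{-1}X_1X_0$ and the identity $X_0^{-2}X_1X_0^2 = X_0^{-1}X_2X_0$, I would first rewrite the relators as $R_1 = [X_0X_1^{-1}, X_2]$ and $R_2 = [X_0X_1^{-1}, X_0^{-1}X_2X_0]$. The key observation is that conjugation by $X_0$ carries these precisely onto the desired commutators: since conjugation distributes over commutators, and since $X_0(X_0X_1^{-1})X_0^{-1} = X_0^2X_1^{-1}X_0^{-1} = Y_1$ by the defining relation for $Y_1$, while $X_0X_2X_0^{-1} = X_1$ and $X_0(X_0^{-1}X_2X_0)X_0^{-1} = X_2$, we obtain $X_0R_1X_0^{-1} = [Y_1,X_1]$ and $X_0R_2X_0^{-1} = [Y_1,X_2]$, all modulo the defining relations. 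A relator and any of its conjugates generate the same normal closure, so these are legitimate Tietze relation-replacements, and carrying them out transforms the presentation into the one in the statement.

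Since every step is a Tietze transformation, the resulting presentation defines $F$. The main obstacle is purely bookkeeping: one has to track the defining relations while conjugating, verifying the elementary identities $X_0^{-2}X_1X_0^2 = X_0^{-1}X_2X_0$, $X_0X_2X_0^{-1} = X_1$, and $X_0(X_0X_1^{-1})X_0^{-1} = Y_1$, all immediate from $X_2 = X_0^{-1}X_1X_0$ and the relation $Y_1 = X_0^2X_1^{-1}X_0^{-1}$. It is worth noting that the relation for $Y_2$ never enters the derivation of the commutator relations; it serves only to adjoin $Y_2$ so that the full generating set $S_F$ is realized.
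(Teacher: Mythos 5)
Your proof is correct and takes essentially the same route as the paper: use the last three relations to express $X_2$, $Y_1$, $Y_2$ in terms of $X_0$, $X_1$, and observe that the two commutator relations are conjugate (by $X_0$) to the standard relators $[X_0X_1^{-1}, X_0^{-1}X_1X_0]$ and $[X_0X_1^{-1}, X_0^{-2}X_1X_0^2]$; your version simply makes the Tietze bookkeeping and the conjugation identities explicit. The only step the paper includes that you leave implicit is the verification (done there via rectangular diagrams) that the defining relations actually hold in $F$ for the specific homeomorphisms $Y_i = C_0 X_i C_0$, which is what guarantees that the generating set of the presented group really is $S_F$ as defined.
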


\begin{proof}
	By using the rectangular diagrams, one can check that the relations hold in $F$. Moreover, one can use the last three relations to express $ X_2, Y_1, Y_2 $ in terms of $X_0, X_1$; after doing this, it turns out that the remaining two relations are conjugate to $ [X_0X_1^{-1}, X_0^{-1}X_1X_0], [X_0X_1^{-1}, X_0^{-2}X_1X_0^2]$, so the above presentation is equivalent to the standard presentation for $F$.
\end{proof}

The reason why we introduced these additional generators is that $\set {X_1,X_2} $ and $ \set{Y_1,Y_2} $ generate two subgroups of $F$ that are both isomorphic to $F$ itself. More generally, given two dyadic numbers $ 0 \leq a < b \leq 1 $ with $ b-a=2^{-k} $ we may consider the subgroup of $F$ consisting of homeomorphism that are the identity outside $ (a,b) $; denote this group by $ F[a,b] $.

\begin{theorem}\cite[Theorem 3.1.3, 5.1.1]{BurilloIntro}\label{undistorted-rescaling}
	For every dyadic numbers $ 0 \leq a < b \leq 1 $, the subgroup $ F[a,b] $ is isomorphic to $F$ and undistorted.
\end{theorem}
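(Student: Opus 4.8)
The plan is to treat the two assertions separately: first exhibit the isomorphism by a rescaling conjugation, and then establish undistortion through the dictionary between word length and tree-pair diagrams. Throughout, norms are taken with respect to some fixed finite generating set, the choice changing them only by a bounded bi-Lipschitz factor.

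For the isomorphism, write $k$ for the integer with $b-a = 2^{-k}$ and let $\phi\colon [0,1] \to [a,b]$ be the affine map $\phi(x) = a + 2^{-k}x$. Since $a$ and $b$ are dyadic and the scaling factor is a power of $2$, the conjugation sending $g \in F$ to the homeomorphism $x \mapsto \phi(g(\inv\phi(x)))$ on $[a,b]$, extended by the identity on $[0,1]\setminus[a,b]$, produces a map that still sends dyadics to dyadics, is piecewise linear with breakpoints at dyadics, and has slopes that are powers of $2$; hence it lands in $F[a,b]$. This conjugation is an injective homomorphism onto $F[a,b]$, so it is the desired isomorphism. I would then fix the generating set $S_{[a,b]} = \set{\phi X_0 \inv\phi, \phi X_1 \inv\phi}$ of $F[a,b]$ corresponding to $X_0, X_1$, so that the word metric of $F[a,b]$ with respect to $S_{[a,b]}$ is isometric to that of $F$ with respect to $\set{X_0,X_1}$.

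For undistortion I would compare both word metrics to the number of carets $c(g)$ of the reduced tree-pair diagram of $g \in F$. The easy inequality $\norm{g}_F \le C \norm{g}_{F[a,b]}$ is immediate, since each generator in $S_{[a,b]}$ is a fixed word in $X_0, X_1$. For the reverse direction the first ingredient is that caret number is coarsely bounded by word length: composing a diagram of $g$ with the bounded-size diagram of a generator and reducing changes the caret count by $O(1)$, so $c(g) \le C_1 \norm{g}_F$ by induction along a geodesic word. The second ingredient is the matching bound $\norm{g}_F \le C_2\, c(g) + C_2$, coming from the metric analysis of $F$ via tree-pair diagrams. Finally, because $[a,b]$ is a standard dyadic interval of level $k$, realizing $g \in F[a,b]$ as a diagram on $[0,1]$ amounts to inserting the diagram of the rescaled element $\bar g = \inv\phi g \phi \in F$ into a fixed scaffolding of $O(k)$ carets isolating the leaf $[a,b]$; hence $c(\bar g) \le c(g)$, the difference depending only on $[a,b]$. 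Chaining these facts yields $\norm{g}_{F[a,b]} = \norm{\bar g}_F \le C_2\, c(\bar g) + C_2 \le C_2\, c(g) + C_2 \le C_1 C_2\, \norm{g}_F + C_2$ for every $g \in F[a,b]$, which together with the easy inequality gives $\norm{g}_{F[a,b]} \asymp \norm{g}_F$.

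I expect the main obstacle to be the bound $\norm{g}_F \le C_2\, c(g) + C_2$: the opposite inequality (caret number bounded by length) and the scaffolding count are elementary diagram manipulations, whereas reconstructing a short word from a diagram of controlled caret number is precisely the hard half of the word-length estimate for $F$, which is the reason the statement is quoted from \cite{BurilloIntro} rather than reproved here.
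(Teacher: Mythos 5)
Your argument is correct and is essentially the standard one: the paper itself only cites this statement, but the proof it gives for the $F_n$ analogue (\cref{fn-undistorted-rescaling}) is exactly your argument --- rescale, graft the tree-pair diagram of the rescaled element onto a fixed scaffolding tree isolating the leaf $[a,b]$, and invoke the quasi-equivalence between word length and caret number (\cref{bound-via-leaves} in the $F_n$ case). You also correctly identify that the only non-elementary ingredient is the bound of word length by caret number, which is the content of the cited reference.
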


In particular $ X_1, X_2 $ generate $ F[\frac12, 1] $, while $Y_1,Y_2$ generate $ F[0, \frac12] $; the element $C_0 \in T$ conjugates $ F[0, \frac12] $ to $ F[\frac12, 1] $ and vice versa, as we can see from the following.

\begin{lemma}
	Thompson's group $T$ is generated by the elements $ S_T $ and satisfies the following relations:
	\begin{enumerate}
		\item $[Y_1, X_1]$,\label{relation-f-1}
		\item $[Y_1, X_2]$,\label{relation-f-2}
		\item $X_0 X_2=X_1 X_0$,\label{relation-f-3}
		\item $Y_1 = X_0^2 X_1^{-1} X_0^{-1}$,\label{relation-f-4}
		\item $ X_0^{-1} Y_2 X_0 = X_1^2 X_2^{-1} X_1^{-1} $,\label{relation-f-5}
		\item $ C_0^2 = 1 $, \label{relation-involution}
		\item $ (C_0 X_0)^3 = 1 $, \label{relation-order-three}
		\item $ C_0 X_2 = Y_2 C_0 $, \label{relation-shuffle-one}
		\item $ C_0 X_1 = Y_1 C_0 $.\label{relation-shuffle-two}
	\end{enumerate}
\end{lemma}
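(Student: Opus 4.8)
The plan is to establish two things: that $S_T$ generates $T$, and that each of the nine relations holds in $T$. For generation, I would first note that $S_F$ contains $X_0$ and $X_1$, which already generate $F$ (this is also implicit above, since the last three relations express $X_2, Y_1, Y_2$ through $X_0, X_1$); hence $\generatedby{S_F} = F$. It is classical that $T$ is generated by $F$ together with a single rotation --- concretely, by $X_0, X_1$ and $C_1 = \inv{X_0} C_0$ (see \cite{CannonFloydParry}). Since $\generatedby{S_T}$ contains $X_0$, $X_1$ and $C_0$, it contains $C_1 = \inv{X_0} C_0$, and therefore equals $T$.

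For the relations, I would treat them in three groups. Relations \cref{relation-f-1,relation-f-2,relation-f-3,relation-f-4} are the first four relations of the presentation of $F$ proved above, and \cref{relation-f-5} is the conjugate by $X_0$ of the relation $Y_2 = X_0 X_1^2 \inv{X_2}\inv{X_1}\inv{X_0}$; so all five hold in $F \leq T$. Relation \cref{relation-involution} is immediate from $C_0$ being the rotation of order two. Relations \cref{relation-shuffle-one,relation-shuffle-two} are essentially definitional: from the defining equation $Y_i = C_0 X_i C_0$ and $C_0^2 = 1$ one obtains $Y_i C_0 = C_0 X_i C_0^2 = C_0 X_i$ for $i \in \set{1,2}$.

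This leaves \cref{relation-order-three}, which is the only identity requiring a genuine computation. I would verify it directly from the explicit homeomorphisms (equivalently, by composing the rectangular diagrams of $C_0$ and $X_0$): writing $g \coloneq C_0 X_0$, one checks that $g$ maps the three intervals $[0, \tfrac12]$, $[\tfrac12, \tfrac34]$, $[\tfrac34, 1]$ affinely onto $[\tfrac12, \tfrac34]$, $[\tfrac34, 1]$, $[0, \tfrac12]$ respectively, so that $g$ cyclically permutes them; following the three affine pieces through three iterations then gives $g^3 = \id$. Alternatively, since $C_1 = \inv{X_0} C_0$ is the standard rotation of order three, one has $C_0 X_0 = \inv{(\inv{X_0} C_0)} = \inv{C_1}$, whose cube is trivial.

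I expect the bookkeeping in \cref{relation-order-three} to be the only nontrivial point; every other relation is either inherited from the presentation of $F$ or follows immediately from the definitions of $C_0$ and the $Y_i$. In particular, no claim that the listed relations form a \emph{presentation} of $T$ is being made here, so it suffices to confirm that they are satisfied, which avoids the harder Tietze-transformation argument that such a presentation statement would require.
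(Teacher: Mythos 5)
Your proposal is correct and takes essentially the same route as the paper: the paper's proof simply notes that relations (1)--(5) are inherited from the presentation of $F$ and that (6)--(9) ``can be checked directly by composing the homeomorphisms,'' which is exactly what you carry out in detail (your explicit verification that $C_0X_0$ cyclically permutes $[0,\tfrac12],[\tfrac12,\tfrac34],[\tfrac34,1]$ affinely, and the derivation of (8)--(9) from $Y_i = C_0X_iC_0$ and $C_0^2=1$, are both sound). Your added remark on generation via $C_1 = X_0^{-1}C_0$ is a harmless and correct supplement that the paper leaves implicit.
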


\begin{proof}
	The first $5$ relations are the relations from $F$, while the last $4$ can be checked directly by composing the homeomorphisms.
\end{proof}

\begin{remark}
	It is possible to show that the above relations yield a presentation for $T$, as they can be obtained by manipulating the finite presentations for $T$ found in the literature \cite{CannonFloydParry,BurilloClearySteinTaback}, but showing this equivalence by hand is a bit tedious. Luckily we can avoid doing this computation: while proving that the Dehn function of $T$ is quadratic, we actually show that for some constant $C>0$ any null-homotopic word of length $n$ in the generating set $S_T$ can be reduced to the trivial word by applying at most $ C \cdot n^2 $ relations (1)-(9), which implies that the above is a presentation for $T$.
\end{remark}

\section{A normal form for F}\label{section-f}

Except for $X_0$, the generators for $F$ interact quite nicely with $C_0$, as $ C_0 X_i = Y_i C_0 $ for $ i \geq 1 $. The aim of this section is to show that every element of $F$ can be expressed as a word in the generators $S_F$ with at most one instance of $X_0$ (or its inverse). This defines a normal form for $F$, which will be very useful when computing the Dehn function of $T$.

Before doing so, we need some estimates on the word metric of $F$. An efficient way to estimate the word length obtained by looking at the so-called reduced tree diagrams for elements in $F$ \cite{BurilloSubgroups}. We translate this notion in terms of self-homeomorphisms of the interval.

Given $f \in F$, there exist subdivisions $ 0=x_0 < \dots < x_k=1 $ and $ 0 = y_0 < \dots < y_k = 1 $, with every interval $ [x_i, x_{i+1}] $ and $ [y_i, y_{i+1}] $ of the form $ [\frac{a}{2^{\ell}}, \frac{a+1}{2^\ell}] $, such that $f$ sends $ x_i $ to $ y_i $ and is affine on every $ [x_i, x_{i+1}] $. If the subdivision is minimal, then the number of intervals $k$ is quasi-equivalent to the norm $ \norm f_{F} $, that is $ \frac 1Ck - C \leq \norm f_{F} \leq Ck + C  $ for some constant $C > 0$ independent of $f$.

From this, we get the following lemmas.

\begin{lemma}\label{estimate-word-length-by-image}
	There exists a constant $C>0$ such that the following holds.
	Let $ 0<a=\frac{d}{2^k}<1 $ be a dyadic number, with $d$ odd. There exists an element $ f\in F $ such that $ f(a) = \frac12 $ and $ \norm f_{F} \leq C \cdot k + C$. Moreover, all $ f \in F $ with $ f(a) = \frac12 $ satisfy $ \norm{f}_{F} \geq \frac1C k-C $.
\end{lemma}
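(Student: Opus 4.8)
The plan is to deduce both inequalities from the quasi-equivalence, recalled just above the statement, between $\norm{f}_{F}$ and the minimal number of intervals in a dyadic subdivision on which $f$ is piecewise affine, equivalently the number of leaves of the reduced tree pair diagram of $f$. Thus it suffices, for the upper bound, to exhibit one $f$ with $f(a)=\frac12$ admitting a subdivision into $O(k)$ intervals, and, for the lower bound, to show that every such $f$ requires at least about $k$ intervals.

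For the upper bound, I would first decompose $[0,a]$ into its maximal dyadic subintervals: writing $a=d/2^k$ in binary as $0.b_1\dots b_k$, there is exactly one such interval of length $2^{-i}$ for each index $i$ with $b_i=1$, so $[0,a]$ is tiled by $p\le k$ dyadic intervals, and likewise $[a,1]$ by $q\le k$ of them. Together these tile $[0,1]$ and hence form the leaves of a binary tree $\mathcal T^-$ with $p+q\le 2k$ leaves, having $a$ as a breakpoint. On the range side, since any dyadic interval can be tiled by exactly $n$ dyadic subintervals for every $n\ge 1$, I can tile $[0,\frac12]$ by $p$ and $[\frac12,1]$ by $q$ dyadic intervals, producing a binary tree $\mathcal T^+$ with the same number $p+q$ of leaves and $\frac12$ as a breakpoint. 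The element $f\in F$ associated to the tree pair $(\mathcal T^-,\mathcal T^+)$ maps the $i$-th interval affinely onto the $i$-th interval, so it sends $[0,a]$ onto $[0,\frac12]$ and in particular $a$ to $\frac12$; being represented by a subdivision into $p+q\le 2k$ intervals, it satisfies $\norm{f}_{F}\le C(p+q)+C\le 2Ck+C$.

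For the lower bound, let $f\in F$ with $f(a)=\frac12$ and consider its minimal subdivision $0=x_0<\dots<x_m=1$, $0=y_0<\dots<y_m=1$, so that $m$ is quasi-equivalent to $\norm{f}_{F}$. If $a=\frac12$ (i.e. $k=1$) the bound is trivial, so assume $k\ge 2$, whence $m\ge 2$ and every range interval has length $2^{-\ell'}\le\frac12$. Since $\frac12$ is an integer multiple of $2^{-\ell'}$ for every $\ell'\ge 1$, it cannot be interior to such an interval, so $\frac12=y_j$ is a breakpoint and therefore $a=x_j$ is a breakpoint on the domain. I then examine the interval $[x_{j-1},a]$ immediately to the left of $a$: it is a dyadic interval $[c/2^\ell,(c+1)/2^\ell]$ with $(c+1)/2^\ell=d/2^k$; comparing $2$-adic valuations and using that $d$ is odd forces $\ell\ge k$. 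Hence the tree underlying the domain subdivision has a leaf at depth $\ell\ge k$, and since that leaf has $\ell$ distinct internal ancestors while in a rooted binary tree the number of leaves is one more than the number of internal nodes, the tree has at least $\ell+1\ge k+1$ leaves. Thus $m\ge k+1$, and the quasi-equivalence gives $\norm{f}_{F}\ge\frac1C m-C\ge\frac1C k-C$.

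The two estimates together prove the lemma after adjusting $C$. The only genuinely delicate point is the lower bound's reduction to a depth statement about the reduced tree: one must check that $\frac12$, and hence $a$, is forced to be a breakpoint of the minimal subdivision, and that the oddness of $d$ pins the depth of the adjacent leaf at $\ge k$; everything else is the routine leaf-counting in binary trees together with the already-stated norm estimate.
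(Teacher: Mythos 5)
Your proof is correct and follows essentially the same approach as the paper: exhibit an explicit standard dyadic partition with $O(k)$ intervals having $a$ as a breakpoint (matched to one with $\frac12$ in the corresponding position) for the upper bound, and for the lower bound observe that $\frac12$, hence $a$, must be a breakpoint of the minimal subdivision, which then forces at least $k+1$ intervals. Your write-up actually supplies more detail than the paper's (the binary-expansion tiling, and the $2$-adic valuation argument pinning the depth of the leaf adjacent to $a$ at $\ge k$), but the underlying argument is the same.
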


\begin{proof}
	There is a subdivision $ x_0 < \dots < x_{k+1} $ where $ x_j = a $ for some $j$; this can be constructed by iteratively bisecting the interval containing $a$. Then choose a subdivision $ y_0 < \dots < y_{k+1} $ such that $y_j = \frac12$; the map associated to these subdivisions satisfies the requirements.

	For the second statement, let $ f \in F $ be any map with $ f(a)=\frac12 $, and choose subdivisions associated to $f$. Since $ \frac 12 $ is a point of the second subdivision (unless $f$ is the identity), then $ a $ must be a point of the first subdivision, and so the two subdivisions must have at least $k+1$ intervals; from this we conclude.
\end{proof}

\begin{lemma}\label{stabilize-one-half}
	There exists $C>0$ such that the following holds.
	Let $ 0<a=\frac{d}{2^k} <1 $ be a dyadic number, with $ a \neq \frac12 $ and $d$ odd. There exists an element $ f\in F $ with $ f(a) = \frac12 $ that can be represented as a word of length at most $ C \cdot k $ of the form:
	\begin{align*}
		X_0      & \cdot w(X_1,X_2)  &  & \text{if } a>\tfrac12    \\
		X_0^{-1} & \cdot w(Y_1, Y_2) &  & \text{if } a < \tfrac12.
	\end{align*}
\end{lemma}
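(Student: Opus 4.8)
The plan is to reduce everything to \cref{estimate-word-length-by-image} applied inside the rescaled copies $F[\frac12,1]$ and $F[0,\frac12]$, using a single application of $X_0^{\pm 1}$ to transfer the relevant ``midpoint'' to $\frac12$. The two key dynamical facts are $X_0(\frac34)=\frac12$ and $X_0^{-1}(\frac14)=\frac12$: indeed $X_0$ acts as $x\mapsto 2x-1$ on $[\frac34,1]$ and as $x\mapsto\frac12 x$ on $[0,\frac12]$. I also note that the hypotheses force $k\ge 2$, since $k=1$ would give $a=\frac12$. Throughout I use the convention that $X_0\,w$ denotes $X_0\composed w$ (so $w$ acts first), which is the convention under which the relation $X_0X_2=X_1X_0$ holds.

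For the case $a>\frac12$, recall that $X_1,X_2$ generate $F[\frac12,1]$, which by \cref{undistorted-rescaling} is isomorphic to $F$ via the rescaling $x\mapsto 2x-1$ of $[\frac12,1]$ onto $[0,1]$. Under this rescaling the midpoint $\frac34$ corresponds to $\frac12$, while $a$ corresponds to $2a-1=\frac{d-2^{k-1}}{2^{k-1}}$, whose numerator is odd precisely because $d$ is odd and $k\ge 2$, so its depth is exactly $k-1$. Applying \cref{estimate-word-length-by-image} inside this copy of $F$ produces $w\in\langle X_1,X_2\rangle$ with $w(a)=\frac34$ that, read in the generators $X_1,X_2$, has length at most $C(k-1)+C$. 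Setting $f\coloneq X_0\,w$ gives $f\in F$ with $f(a)=X_0(\tfrac34)=\frac12$, and the word $X_0\cdot w(X_1,X_2)$ has length at most $1+C(k-1)+C$, which is bounded by $C'k$ after absorbing the additive constant using $k\ge 2$.

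The case $a<\frac12$ is entirely symmetric: now $Y_1,Y_2$ generate $F[0,\frac12]$, isomorphic to $F$ via $x\mapsto 2x$, under which the midpoint $\frac14$ corresponds to $\frac12$ and $a$ corresponds to $2a=\frac{d}{2^{k-1}}$, again of depth $k-1$. Thus \cref{estimate-word-length-by-image} yields $w\in\langle Y_1,Y_2\rangle$ with $w(a)=\frac14$ and word length at most $C(k-1)+C$, and then $f\coloneq X_0^{-1}\,w$ satisfies $f\in F$ and $f(a)=X_0^{-1}(\tfrac14)=\frac12$, giving a word $X_0^{-1}\cdot w(Y_1,Y_2)$ of the required form and length. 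The only genuinely delicate points are the bookkeeping that the rescaled points have depth exactly $k-1$ (where oddness of $d$ and $k\ge 2$ are used) and the two dynamical identities $X_0(\frac34)=\frac12$ and $X_0^{-1}(\frac14)=\frac12$, which explain why a single $X_0^{\pm1}$ on the correct side suffices; neither presents a serious obstacle.
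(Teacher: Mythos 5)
Your proof is correct and follows essentially the same route as the paper: rescale to reduce to \cref{estimate-word-length-by-image} inside $F[\frac12,1]$ (resp.\ $F[0,\frac12]$), then apply a single $X_0$ (resp.\ $X_0^{-1}$) to carry $\frac34$ (resp.\ $\frac14$) to $\frac12$. The extra bookkeeping you supply (that $k\ge 2$ and that the rescaled point has depth exactly $k-1$) is consistent with, and slightly more explicit than, the paper's argument.
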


\begin{proof}
	If $ a > \frac12 $, by \cref{estimate-word-length-by-image} we can obtain an element of $F$ that sends $2a-1$ to $ \frac12 $: by rescaling with $ x \mapsto \frac12(x+1) $ we get $ g \in F[\frac12, 1] $ with $ \norm{g}_{F[\frac12, 1]} \leq C(k-1) $ and $ g(a) = \frac34 $. If $w(X_1,X_2)$ is a shortest representative for $g$, then $ X_0 w(X_1,X_2) $ satisfies the hypotheses.

	If $ a<\frac12 $, the construction is analogous by choosing $ g \in F[0, \frac12] $ with $g(a)=\frac14$, and using $ X_0^{-1} w(Y_1,Y_2) $.
\end{proof}

We are now able to show that elements of $F$ can be represented by words that have at most one instance of the letter $ X_0 $.

\begin{proposition}\label{bound-nice-f}
	Let $ f \in F $. Then $f$ is represented by a word $w(S)$ of the form
	\begin{align*}
		w & = v(X_1,X_2) \cdot  X_0^{-1} \cdot  u(Y_1,Y_2) &  & \text{if }  f^{-1}(\tfrac12) < \tfrac12   \\
		w & = u(Y_1, Y_2) \cdot  X_0 \cdot  v(X_1,X_2)     &  & \text{if }  f^{-1}(\tfrac12) > \tfrac12   \\
		w & = v(X_1,X_2) \cdot  u(Y_1, Y_2)                &  & \text{if }  f^{-1}(\tfrac12) = \tfrac12 .
	\end{align*}
	Moreover, the word $w$ can be chosen of length bounded above by $ C \cdot \norm{f}_F $, for some constant $C$ independent of $f$.
\end{proposition}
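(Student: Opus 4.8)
The plan is to split according to the position of $a := f^{-1}(\frac12)$ relative to $\frac12$ and, in each case, to produce the two factors of the normal form directly as homeomorphisms rather than by word manipulation. In the case $a=\frac12$ we have $f(\frac12)=\frac12$, so $f$ preserves both halves of the circle and decomposes as a commuting product $f = f_R f_L$ with $f_R \in F[\frac12,1]=\langle X_1,X_2\rangle$ and $f_L \in F[0,\frac12]=\langle Y_1,Y_2\rangle$, obtained by restricting $f$ to $[\frac12,1]$ and to $[0,\frac12]$. This already yields $f = v(X_1,X_2)\,u(Y_1,Y_2)$. For the length, the minimal subdivision of $f$ has at most boundedly many intervals more than the combined subdivisions of $f_L$ and $f_R$, so each factor has a subdivision with $O(\norm f_F)$ intervals; undistortedness (\cref{undistorted-rescaling}) converts this into $\norm{f_L}_F,\norm{f_R}_F = O(\norm f_F)$, hence $\abs u + \abs v = O(\norm f_F)$.

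For $a>\frac12$ (the case $a<\frac12$ being the mirror image, with $X_0,\,F[\frac12,1]$ and $X_0^{-1},\,F[0,\frac12]$ interchanged) I aim for $f = u(Y_1,Y_2)\,X_0\,v(X_1,X_2)$ with $u \in F[0,\frac12]$ and $v \in F[\frac12,1]$. The idea is to build $v$ so that $X_0 \circ v$ agrees with $f$ not just at $a$ but on the whole interval $[a,1]$. Since $X_0$ maps $[\frac34,1]$ affinely onto $[\frac12,1]$, I set $v(x) := X_0^{-1}(f(x)) = \frac{f(x)+1}{2}$ for $x \in [a,1]$, which carries $[a,1]$ onto $[\frac34,1]$ with $v(a)=\frac34$ and has power-of-two slopes, and I extend $v$ to an element of $F[\frac12,1]$ by choosing any dyadic piecewise-linear map $[\frac12,a]\to[\frac12,\frac34]$. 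Then $u := f \circ v^{-1}\circ X_0^{-1}$ satisfies $u(y)=f(f^{-1}(y))=y$ for every $y\in[\frac12,1]$, so $u\in F[0,\frac12]$, and $f = u X_0 v$ has the required shape.

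The step I expect to be the main obstacle is precisely \emph{why} this subinterval-matching is needed. The naive route—apply \cref{stabilize-one-half} to get a nice-form $g$ with $g(a)=\frac12$, note that $fg^{-1}$ fixes $\frac12$, and invoke the case $a=\frac12$—produces a word of the form $\alpha(X_1,X_2)\,\beta(Y_1,Y_2)\,X_0\,w(X_1,X_2)$ carrying a stray $\langle X_1,X_2\rangle$-factor to the left of $X_0$. This factor cannot be shuffled across $X_0$, because $X_0^{-1}X_2X_0 = X_3$ leaves the generating set $\set{X_1,X_2}$; so one genuinely needs to arrange for that factor to vanish. Demanding that $X_0\circ v$ coincide with $f$ on all of $[a,1]$ (not merely at $a$) is exactly what forces the complementary factor $u$ into $F[0,\frac12]$, eliminating the stray factor.

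It remains to control the length, for which the estimates of \cref{estimate-word-length-by-image,stabilize-one-half} and undistortedness (\cref{undistorted-rescaling}) suffice. The restriction of $v$ to $[a,1]$ inherits the breakpoints of $f$, hence has $O(\norm f_F)$ of them, while the piece on $[\frac12,a]$ can be realized with $O(k)$ breakpoints for $a = d/2^k$; and $k = O(\norm f_F)$ by \cref{estimate-word-length-by-image} applied to $f$, which satisfies $f(a)=\frac12$. Thus $v$ has a subdivision with $O(\norm f_F)$ intervals, so $\abs v = O(\norm f_F)$ by undistortedness of $F[\frac12,1]$; and $\norm u_F \le \norm f_F + \norm v_F + \norm{X_0}_F = O(\norm f_F)$, so likewise $\abs u = O(\norm f_F)$ by undistortedness of $F[0,\frac12]$. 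Adding the single $X_0$ gives $\abs w = O(\norm f_F)$, as claimed.
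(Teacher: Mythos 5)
Your proof is correct, but the mechanism you use in the main case $f^{-1}(\tfrac12)>\tfrac12$ is genuinely different from the paper's. The paper does take exactly the route you dismiss as the ``naive'' one: it applies \cref{stabilize-one-half} to get $f_1 = X_0\,w_1(X_1,X_2)$ with $f_1(a)=\tfrac12$, decomposes $f_1\circ f^{-1}\in F[0,\tfrac12]\times F[\tfrac12,1]$, and is then left with a stray $\langle X_1,X_2\rangle$-factor to the left of $X_0$ --- which it \emph{does} shuffle across, using $X_1^{\epsilon}X_0=X_0X_2^{\epsilon}$ and $X_2^{\epsilon}X_0=X_0X_3^{\epsilon}=X_0X_1^{-1}X_2^{\epsilon}X_1$. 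Your claim that this shuffle is impossible because $X_0^{-1}X_2X_0=X_3$ leaves the alphabet is mistaken: $X_3=X_1^{-1}X_2X_1$ is again a word in $\{X_1,X_2\}$, so each letter passes $X_0$ at the cost of a bounded length increase and the linear bound survives. That said, your alternative is valid and arguably cleaner: by defining $v:=X_0^{-1}\circ f$ on $[a,1]$ and extending it over $[\tfrac12,a]$, you force $u:=f\circ v^{-1}\circ X_0^{-1}$ to be the identity on all of $[\tfrac12,1]$, so no stray factor ever appears and no word shuffling is needed; the price is that you must verify by hand that $v$ admits a dyadic subdivision with $O(\norm{f}_F)$ intervals (which requires refining the subdivision of $f$ to contain $a$, costing $O(k)$ extra intervals with $k=O(\norm{f}_F)$ by \cref{estimate-word-length-by-image}), whereas the paper gets its length bounds purely from \cref{stabilize-one-half} and undistortedness. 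Both arguments yield the same conclusion with comparable effort; yours trades a combinatorial word manipulation for a direct dynamical construction.
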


\begin{proof}
	Let $ a=\frac d{2^k} \coloneqq f^{-1}(\frac12) $; assume that $ a > \frac12 $. Let $f_1$ be the function with $f_1(a) = \frac12$ given by \cref{stabilize-one-half}, represented by the word $ X_0 w_1(X_1,X_2) $ of length at most $ C \cdot k$. By \cref{estimate-word-length-by-image}, this means that the length of $w_1$ is bounded linearly in terms of $ \norm f_{F} $.

	Since $ f_1 \circ f^{-1} $ fixes $ \frac12 $, then it belongs to the subgroup $ F[0,\frac12] \times F[\frac12, 1] $, which is undistorted by \cref{undistorted-rescaling}. There exists $ f_2 \in F[0,\frac12] $, $ f_3 \in F[\frac12, 1] $, such that $ f_1 \circ f^{-1} = f_3 \circ f_2 $; let $ w_2(Y_1,Y_2) $ and $w_3(X_1,X_2)$ be words representing $ f_2 $ and $f_3 $,
	with
	\[
		\abs{w_2} + \abs{w_3} \leq C' \cdot \norm{f_1 \circ f^{-1}}_F \leq C' \cdot (\norm{f}_F + \norm{f_1}_F),
	\]
	so the length of both $ w_2 $ and $ w_3 $ is bounded by a linear function of $ \norm f_{F} $.

	So $ f = f_2^{-1} \circ f_3^{-1} \circ f_1$ is represented by
	\[
		w_2^{-1} (Y_1,Y_2) \cdot  w_3^{-1} (X_1, X_2) \cdot  X_0 \cdot w_1(X_1,X_2).
	\]
	Finally, since for $ \epsilon \in \set{\pm1} $ we have
	\[
		X_1^{\epsilon} X_0 = X_0 X_2^{\epsilon}, \qquad X_2^{\epsilon} X_0 = X_0 X_3^{\epsilon} = X_0 X_1^{-1} X_2^{\epsilon} X_1
	\]
	we obtain that $ w_3^{-1}(X_1,X_2) X_0 = X_0 w_4(X_1,X_2) $ for some word $w_4$. Therefore $f$ is represented by
	\[
		w_2^{-1}(Y_1,Y_2) \cdot  X_0  \cdot w_4(X_1,X_2),
	\]
	with $w_4$ bounded by a linear function of $\norm f_F$.

	If $f^{-1}(\frac12) < \frac12$, then $ f(\frac12) > \frac12 $, so the result follows by applying the above on $ f^{-1} $, and then taking the inverse at the end.

	If $f(\frac12)=\frac12$, then we conclude by noting that $f \in F[0,\frac12] \times F[\frac12,1]$ and by using that the latter is an undistorted subgroup of $F$ by \cref{undistorted-rescaling}
\end{proof}

\section{The Dehn function of $T$}\label{section-t}

The estimates for the word length based on tree diagrams hold for Thompson's group $T$ as well (see \cite{BurilloClearySteinTaback}). In particular, by the same arguments we get the following.

\begin{lemma}\label{lower-bound-norm-t-by-depth}
	There exists $ C>0 $ such that for every $ f \in T $, if $ f(0) = \frac{d}{2^k} $ with $d$ odd, then $ \norm f_{T} \geq \frac 1C k - C  $.
\end{lemma}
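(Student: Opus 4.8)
The plan is to mirror the lower bound proved in the second part of \cref{estimate-word-length-by-image}, transported from $F$ to $T$ via the subdivision estimate for the word length quoted just before the statement. Let $f \in T$ with $f(0) = a = \frac{d}{2^k}$ and $d$ odd; we may assume $k \geq 1$, since otherwise the inequality is vacuous. First I would fix a minimal adapted subdivision for $f$, cutting domain and codomain each into standard dyadic intervals $[\frac{c}{2^\ell}, \frac{c+1}{2^\ell}]$ so that $f$ maps each domain interval affinely onto the corresponding codomain interval. By the tree-diagram estimate for $T$, the number $m$ of intervals in this subdivision satisfies $\norm{f}_T \geq \frac1C m - C$ for a uniform constant $C$.

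The key combinatorial observation is that the basepoint $0 \equiv 1$ is always a breakpoint of the domain subdivision, so its image $a = f(0)$ is a breakpoint of the codomain subdivision. Writing $a$ as the right endpoint $\frac{c+1}{2^\ell}$ of the codomain interval immediately to its left, the equality $(c+1)2^k = d\,2^\ell$ with $d$ odd is impossible for $\ell < k$, and hence forces $\ell \geq k$. Thus the codomain subdivision contains an interval of length $2^{-\ell}$ with $\ell \geq k$. Interpreting the subdivision as a finite rooted binary tree, a leaf at depth $\ell$ forces at least $\ell + 1 \geq k+1$ leaves in total, since the root-to-leaf path passes through $\ell$ internal nodes, each of which contributes a disjoint sibling subtree with at least one further leaf. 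Therefore $m \geq k+1$, and combining with the estimate yields $\norm{f}_T \geq \frac1C(k+1) - C \geq \frac1C k - C$, as desired.

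The step I expect to require the most care is the passage from $F$ to $T$: one must know that the subdivision description of the word length remains quasi-isometrically valid for $T$ (this is the content of the estimates in \cite{BurilloClearySteinTaback}), and that in the cyclic tree-pair model for $T$ the image of the seam $0$ is genuinely recorded as a codomain breakpoint. Granting this, the depth argument on the codomain side is \emph{identical} to the $F$ case, because it never used that $f$ fixes $0$, only that $f(0)$ is a codomain breakpoint of depth $k$. As a robustness check one can also avoid the subdivision machinery altogether and argue directly: each generator in $S_T$ is piecewise linear with slopes in a bounded range $2^{-E} \leq 2^{e} \leq 2^{E}$ and dyadic break-constants of bounded depth, so applying a single generator raises the depth of a dyadic point by at most a fixed constant $K$; iterating along a geodesic word of length $\norm{f}_T$ gives $k \leq K\,\norm{f}_T$, which is the same bound up to constants.
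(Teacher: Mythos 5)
Your argument is correct and is essentially the paper's own: the paper gives no written proof here, merely invoking ``the same arguments'' as in the second half of \cref{estimate-word-length-by-image} together with the tree-diagram length estimates for $T$ from \cite{BurilloClearySteinTaback}, and what you spell out --- that $0$ is a domain breakpoint, hence $f(0)=\frac{d}{2^k}$ is a codomain breakpoint whose odd numerator forces a leaf of depth at least $k$ and thus at least $k+1$ intervals --- is precisely that argument. Your closing ``depth increases by at most a constant per generator'' observation is a valid, even more elementary alternative, but the main line of your proof matches the paper's intended one.
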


\begin{proposition}\label{nice-t}
	There exists $ C>0 $ such that every $ f \in T \setminus F $ is represented by a word
	\[
		w = u(S_F) \cdot C_0 \cdot v(S_F),
	\]
	such that $\abs w \leq C \cdot \norm{f}_T $.
\end{proposition}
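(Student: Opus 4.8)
The plan is to reduce the claim for $T$ to the normal form for $F$ established in \cref{bound-nice-f}, using the involution $C_0$ to "peel off" the non-fixing behaviour at the basepoint. Given $f \in T \setminus F$, the point $f(0)$ is not the basepoint, but since $f \notin F$ we have $f(0) \neq 0$. The key idea is that $C_0$ moves $0$ to $\tfrac12$, so I would first find an element of the form $C_0 \cdot (\text{element of } F)$ whose composition with $f$ lands back in $F$, and then apply \cref{bound-nice-f} to the $F$-part.

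Concretely, first I would locate $f(0) = \tfrac{d}{2^k}$ with $d$ odd, and by \cref{lower-bound-norm-t-by-depth} we have $k \leq C \norm{f}_T + C$, so any word whose length is controlled linearly in $k$ will also be controlled linearly in $\norm f_T$. The goal is to produce an element $g \in F$ so that $C_0 \cdot g$ sends $0$ to $\tfrac12$ (or some fixed convenient point), hence $(C_0 g)^{-1} f$ fixes $0$ and thus lies in $F$. For this I would use \cref{stabilize-one-half}: there is $g \in F$ with $g(f(0)) = \tfrac12$ represented by a word of length $\leq C k$ in $S_F$ of the prescribed one-$X_0$ shape. Then $g \circ f$ sends $0$ to $\tfrac12$, so $C_0 \circ g \circ f$ sends $0$ to $0$, i.e. $C_0 g f \in F$. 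Writing $h := C_0 g f \in F$ and applying \cref{bound-nice-f} gives a word $u'(S_F)$ for $h$ of length $\leq C \norm h_F$, and since $\norm h_F$ is bounded linearly by $\norm f_T$ (as $g$ and $C_0$ have bounded-length expressions and $F$ is undistorted in $T$), we control the total length. Rearranging $h = C_0 g f$ yields $f = g^{-1} C_0^{-1} h = g^{-1} C_0 h$ using $C_0^2 = 1$; since $g^{-1}$ is a word in $S_F$ and $h = u'(S_F)$, this is exactly of the form $u(S_F) \cdot C_0 \cdot v(S_F)$ with $u = g^{-1}(S_F)$ and $v = u'(S_F)$.

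The length bookkeeping is the routine part: every piece ($g$, its inverse, $C_0$, and $h$) has length linear in $k$ and hence in $\norm f_T$, so the assembled word satisfies $\abs w \leq C \norm f_T$ after adjusting the constant. I would need to double-check that $h = C_0 g f$ genuinely fixes $0$: since $g f (0) = \tfrac12$ and $C_0(\tfrac12) = 0$, indeed $h(0) = 0$, so $h \in F$ as required.

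The main obstacle I expect is not the algebra but verifying that exactly one $C_0$ suffices and that the case $f \notin F$ guarantees $f(0) \neq 0$ so that \cref{stabilize-one-half} (which requires its input to differ from the basepoint) applies cleanly; a minor subtlety is that \cref{stabilize-one-half} is stated for a point $a \neq \tfrac12$ in $(0,1)$, so I must confirm $f(0)$ avoids the degenerate values, handling separately the edge case where $f(0) = \tfrac12$ (in which case $C_0 f$ already fixes $0$ and I can take $g = \id$, giving directly $f = C_0 \cdot h$ with $h \in F$). Once these dynamical checks are in place, the undistortedness of $F$ in $T$ (analogue of \cref{undistorted-rescaling}, available from the tree-diagram estimates cited at the start of this section) closes the length estimate.
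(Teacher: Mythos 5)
Your proposal is correct and follows essentially the same route as the paper: move the relevant image of the basepoint to $\tfrac12$ by a short element of $F$ (the paper uses $f^{-1}(0)$ and factors $f = f_2 \circ C_0 \circ f_1$, you use $f(0)$ and factor $f = g^{-1}\circ C_0 \circ h$, which is the mirror-image of the same decomposition), then observe the remaining factor fixes $0$ and invoke undistortedness of $F$ in $T$ together with \cref{lower-bound-norm-t-by-depth} for the length bound. The only cosmetic difference is that you route the auxiliary element through \cref{stabilize-one-half} and \cref{bound-nice-f} where the paper only needs \cref{estimate-word-length-by-image} and an arbitrary short representative, which is harmless.
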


\begin{proof}

	Let $ a= \frac{d}{2^k} := f^{-1}(0) $. Let $ f_1 \in F $ be the element that sends $ a $ to $ \frac12 $ given by \cref{estimate-word-length-by-image}. By \cref{lower-bound-norm-t-by-depth}, and since $F$ is undistorted in $T$ \cite[Corollary 5.2]{BurilloClearySteinTaback}, we have $\norm{f_1}_F \leq C' \norm f_{T} + C'$ for some constant $C'$.

	Now $ f_2 := f \circ f_1^{-1} \circ C_0 $ fixes the point $0$, so it belongs to $F$; moreover
	\[
		\norm{f_2}_F \leq C'' \norm{f_2}_T \leq C'' (\norm{f}_T + \norm{f_1}_T + 1).
	\]

	Since $ f = f_2 \circ C_0 \circ f_1 $, we may conclude by choosing short representatives $ w_1(S_F), w_2(S_F) $ for $ f_1, f_2 $.
\end{proof}

We have shown that all elements in $T$ are represented by a word containing at most one letter $C_0$, and whose length is bounded by a linear function of the norm of the element; we can employ this nice representatives it to compute the Dehn function of $T$.

A standard argument, employed for example in \cite{GerstenShort,CarterForester,SPF} shows that it suffices to bound the area of \emph{triangular loops}. This fact can be stated generally as follows:

\begin{proposition}\label{triangles}
	Let $G = \presentation S{\mathcal R}$ be a finitely presented group, and for each $g \in G$ choose a subset $W_g$ of words in $S$ that represent the element $g$.
	Suppose that there exists $C>0$ such the following hold:
	\begin{itemize}
		\item  for every $g \in G$ there is $ w \in W_g $ with $ \abs{w} \leq C \cdot \norm{g}_G $;
		\item for every null-homotopic word $ w = w_1 w_2 w_3 $, with $w_i \in W_{g_i}$ for some elements $ g_1, g_2, g_3 $, we have that $ \Area(w) \leq C \abs{w}^2 $.
	\end{itemize}
	Then the Dehn function of $G$ is quadratic.
\end{proposition}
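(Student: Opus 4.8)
The plan is to establish the quadratic upper bound $\Dehn G(N) \asympleq N^2$, which is the substance of the statement; the matching lower bound is a separate and standard matter (for the groups of interest it follows from non-hyperbolicity, and is not what the triangle hypothesis is for). Given a null-homotopic word $w = s_1 \cdots s_n$, I would fill the loop it traces in the Cayley graph by a \emph{balanced} triangulation assembled from the chosen normal forms, rather than by the naive fan triangulation that joins the basepoint to every vertex. The whole point of insisting on balance is that it produces triangles whose side-lengths decay geometrically as one descends the recursion while their number grows geometrically, so that the quadratic cost of each triangle (supplied by the second hypothesis) sums over all levels to a convergent \emph{geometric series} and hence to a quadratic total. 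The fan triangulation, by contrast, uses $n$ triangles of perimeter $O(n)$ and yields only a cubic bound, so this balancing is exactly where the argument has to be careful.

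Concretely, set $g_i := s_1 \cdots s_i$ for $0 \le i \le n$, so that $g_0 = g_n = 1$ because $w$ is null-homotopic, and observe that the subword $s_{i+1}\cdots s_j$ represents $g_i^{-1} g_j$, whence $\norm{g_i^{-1}g_j}_G \le j - i$. I recursively bisect the index interval $[0,n]$, splitting each $[i,j]$ at its midpoint $m = \lfloor (i+j)/2\rfloor$. For every interval $[i,j]$ occurring in the recursion I use the first hypothesis to pick a normal form $\tau_{ij} \in W_{g_i^{-1}g_j}$ with $\abs{\tau_{ij}} \le C\norm{g_i^{-1}g_j}_G \le C(j-i)$; the decisive feature is that this length is controlled by the \emph{interval length} $j-i$, not merely by $n$. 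The filling of $w$ is then the union, over all internal nodes $[i,j]$ of the recursion tree, of the triangular loops $\tau_{im}\,\tau_{mj}\,\tau_{ij}^{-1}$, each of which is null-homotopic and built from normal forms, hence covered by the second hypothesis; at the leaves $[i,i+1]$ one closes up against the genuine edge $s_{i+1}$ using the bigon $\tau_{i,i+1}\, s_{i+1}^{-1}$.

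For the estimate, a triangle attached to an interval of length $L = j-i$ has perimeter $\abs{\tau_{im}} + \abs{\tau_{mj}} + \abs{\tau_{ij}} \le 3CL$, so by the second hypothesis its area is at most $C(3CL)^2 = O(L^2)$. At depth $\ell$ of the recursion there are at most $2^\ell$ intervals, each of length at most $\lceil n/2^\ell\rceil$, so the total cost at that depth is $O\bigl(2^\ell (n/2^\ell)^2\bigr) = O(n^2/2^\ell)$; summing this geometric series over the $O(\log n)$ depths gives $O(n^2)$. The leaf bigons $\tau_{i,i+1}\,s_{i+1}^{-1}$ are null-homotopic words of length at most $C+1$, so each has area bounded by the constant $\Dehn G(C+1)$, and the $n$ of them contribute only $O(n)$, which is absorbed. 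Hence $\Area(w) = O(n^2)$ for every null-homotopic $w$, as required.

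The main obstacle — and the only place where the argument could degrade to cubic — is precisely this balancing, and it is resolved by the subword bound $\norm{g_i^{-1}g_j}_G \le j-i$ feeding the first hypothesis. Everything else is routine bookkeeping that I would only touch on lightly: when $n$ is not a power of two the floor-splitting changes the interval lengths by at most an additive constant and leaves the geometric series quadratic; the root interval $[0,n]$ has $g_0 = g_n$, so $\tau_{0,n}$ may be taken empty and its cell is a harmless bigon between two normal forms; and one should read a \emph{triangular loop} as a concatenation of three normal-form segments up to reversal, or equivalently assume $W$ closed under inversion, so that the side $\tau_{ij}^{-1}$ is legitimately of the form required by the second hypothesis. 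This last normalization is harmless because inverting a word changes neither its length nor its area, so both hypotheses are preserved.
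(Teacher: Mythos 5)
Your proof is correct and is essentially the standard balanced-bisection argument that the paper invokes by citation rather than reproducing (subdividing a Van Kampen diagram into triangular cells as in \cite{CarterForester}, or the direct induction of \cite[Theorem 5.9]{SPF}): recursively halving the loop makes the chord lengths decay geometrically, so the per-level cost forms a geometric series summing to $O(n^2)$, and your treatment of the leaf bigons, the empty root chord, and closure of the sets $W_g$ under inversion are exactly the harmless normalizations the paper leaves implicit. Nothing further is needed.
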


The set $ W_g $ should be thought as some preferred word representatives for $g$. In our setting, the set $ W_g $ consists of the word representatives for $g \in T$ that have at most one $C_0$; they exist by \cref{nice-t}.

The proposition can be proven by subdividing a Van Kampen diagram for a generic loop into triangular cells, as explained in \cite{CarterForester}, or even directly by induction \cite[Theorem 5.9]{SPF}.
Note that it is non-restrictive to prove the lemma for the case $ \abs{W_g} = 1 $, that is when using a unique normal form, as restricting every $W_g$ to a singleton can only weaken the hypotheses.

This simplifies our work considerably, as it suffices to compute the area of null-homotopic words containing up to three instances of $C_0$. Note that a word containing exactly one instance of $C_0$ cannot be null-homotopic, as it cannot send the point $0$ to itself. So there are just two cases to check.

\begin{proposition}
	There exists $ C>0 $ such that every null-homotopic word of the form
	\[
		w=C_0 \cdot w_1(S_F) \cdot C_0 \cdot w_2(S_F)
	\]
	has area at most $ C \cdot \abs{w}^2 $.
\end{proposition}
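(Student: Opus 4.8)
The plan is to use the dynamics to force the middle factor into the product subgroup $F[0,\tfrac12]\times F[\tfrac12,1]$, and then to reduce the whole computation to one inside $F$, where the Dehn function is quadratic. Write $g_1,g_2\in F$ for the elements represented by $w_1,w_2$. The key opening observation is that null-homotopy of $w$ forces $g_1$ to fix the point $\tfrac12$: from $C_0 g_1 C_0 g_2 = 1$ we get $g_2 = C_0 g_1^{-1} C_0$, and since $g_2\in F$ fixes $0$ we have $0 = g_2(0) = C_0\bigl(g_1^{-1}(\tfrac12)\bigr)$; as $C_0(x)=0$ only for $x=\tfrac12$, this yields $g_1^{-1}(\tfrac12)=\tfrac12$, hence $g_1(\tfrac12)=\tfrac12$. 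Thus $g_1$ preserves both halves of the circle.

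Because $g_1^{-1}(\tfrac12)=\tfrac12$, the third case of \cref{bound-nice-f} provides a word $w_1'' = v(X_1,X_2)\cdot u(Y_1,Y_2)$ representing $g_1$ that contains \emph{no} occurrence of $X_0$ and has length $\abs{w_1''}\leq C\norm{g_1}_F \leq C\abs{w}$. I would first replace the subword $w_1$ by $w_1''$ inside $w$: this amounts to inserting the word $w_1(w_1'')^{-1}$, which is null-homotopic in $F$ and of length $O(\abs w)$, so by the quadratic Dehn function of $F$ the replacement costs area $O(\abs w^2)$. After this step the word reads $C_0\cdot w_1''\cdot C_0\cdot w_2$.

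Next I would commute the leading $C_0$ rightwards through $w_1''$. Since $w_1''$ involves only $X_1,X_2,Y_1,Y_2$, each letter passes across $C_0$ using \cref{relation-shuffle-one,relation-shuffle-two} together with \cref{relation-involution} (which give $C_0 X_i = Y_i C_0$ and $C_0 Y_i = X_i C_0$, and likewise for inverses), replacing each generator by the one with the opposite label. This transforms $C_0 w_1''$ into $(w_1'')^{\sigma} C_0$, where $\sigma$ swaps $X_i\leftrightarrow Y_i$, at a cost of $O(\abs{w_1''})=O(\abs w)$ relations. The two adjacent copies of $C_0$ then cancel by \cref{relation-involution}, leaving the word $(w_1'')^{\sigma}\cdot w_2$ over the alphabet $S_F$.

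Finally, $(w_1'')^{\sigma} w_2$ represents $C_0 g_1 C_0 g_2 = 1$ and has length $O(\abs w)$, so it is a null-homotopic word in $F$; a second appeal to the quadratic Dehn function of $F$ bounds its area by $O(\abs w^2)$. Summing the three contributions gives $\Area(w)\leq C\abs{w}^2$. The only genuinely delicate point is the opening dynamical observation: without knowing \emph{a priori} that $g_1$ fixes $\tfrac12$, one cannot eliminate $X_0$ from $w_1$, and $C_0$ admits no length-efficient commutation past $X_0$ (the relation $(C_0X_0)^3=1$ in \cref{relation-order-three} multiplies rather than removes occurrences of $C_0$). Everything after that observation is routine bookkeeping of linear- and quadratic-area moves.
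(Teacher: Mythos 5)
Your argument is correct and follows essentially the same route as the paper: establish via the dynamics that the relevant factor fixes $\tfrac12$ and hence admits an $X_0$-free representative over $\{X_1,X_2,Y_1,Y_2\}$, push $C_0$ across it using relations \cref{relation-shuffle-one,relation-shuffle-two,relation-involution}, cancel the two occurrences of $C_0$, and finish with the quadratic Dehn function of $F$. The only (immaterial) difference is that you rewrite $w_1$ while the paper rewrites $w_2$ as a word for $C_0 g_1^{-1} C_0$ and shifts it leftward past the second $C_0$ — a mirror image of the same computation.
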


\begin{proof}
	Let $ f_2 \in F $ be the element represented by $ w_2 $ and $ C_0 w_1^{-1} C_0 $. Since $ C_0 w_1 C_0 $ fixes the point $ \frac12 $, then $ f_2 $ can be represented by a word $ u(X_1,X_2,Y_1,Y_2)$ of length bounded by $ C \abs{w_2} $, for some $ C>0 $. Therefore, using $ \Dehn F((C+1) \cdot \abs{w_2}) $ times relations \cref{relation-f-1}--\cref{relation-f-5}, we can replace $w_2$ by $u$, obtaining the word
	\[
		C_0 \cdot  w_1(S_F) \cdot  C_0 \cdot u(X_1,X_2,Y_1,Y_2).
	\]
	Now using relations \cref{relation-shuffle-one,relation-shuffle-two}, we can shift all letters in $u$ to the left of $C_0$, obtaining
	\[
		C_0 \cdot w_1(S_F) \cdot u(Y_1,Y_2,X_1,X_2) \cdot  C_0,
	\]
	where with $u(Y_1,Y_2,X_1,X_2)$ we denote the word obtained by $ u(X_1,X_2,Y_1,Y_2) $ replacing $ X_1 \rightsquigarrow Y_1 $, $ X_2 \rightsquigarrow Y_2 $, $Y_1 \rightsquigarrow X_1$ and $ Y_2 \rightsquigarrow X_2$ (note that $ C_0 X_1 = Y_1 C_0 $ is conjugate to $ C_0 Y_1 = X_1 C_0 $, since $C_0$ is an involution).
	After conjugating by $C_0$, we get a word in $S_F$. The result follows from the fact that $F$ has quadratic Dehn function \cite{Guba}.
\end{proof}

\begin{proposition}
	There exists $ C>0 $ such that every null-homotopic word of the form
	\[
		w=C_0 \cdot  w_1(S_F) \cdot C_0 \cdot w_2(S_F) \cdot  C_0 \cdot w_3(S_F)
	\]
	has area at most $ C \cdot \abs{w}^2 $.
\end{proposition}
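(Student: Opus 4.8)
The plan is to use the order-three relation \cref{relation-order-three} to turn this three-$C_0$ word into a null-homotopic word with only two copies of $C_0$, and then to appeal to the previous proposition. The key identity is that \cref{relation-involution,relation-order-three} together give
\[
	C_0 X_0 C_0 = X_0^{-1} C_0 X_0^{-1}, \qquad C_0 X_0^{-1} C_0 = X_0 C_0 X_0 ,
\]
so that a copy of $X_0^{\pm1}$ trapped between two copies of $C_0$ can be rewritten, at bounded area, using a single $C_0$. This is the mechanism that lowers the $C_0$-count by one.

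First I would tidy up the block $C_0 w_1 C_0$. Since $F$ has quadratic Dehn function, I rewrite $w_1$ into the normal form of \cref{bound-nice-f}, so that it reads $a_1 \cdot X_0^{\pm1} \cdot b_1$ with $a_1, b_1$ words in $X_1, X_2, Y_1, Y_2$ (or $w_1$ contains no $X_0$ at all); this costs area at most $\Dehn{F}(\abs{w_1})$, which is a constant multiple of $\abs{w}^2$, and keeps all lengths linear in $\abs w$. Next, using the shuffle relations \cref{relation-shuffle-one,relation-shuffle-two} and \cref{relation-involution}, I push the two flanking copies of $C_0$ inwards: each generator among $X_1, X_2, Y_1, Y_2$ (or its inverse) crosses $C_0$ at the cost of one relation and is replaced by its partner under the swap $X_j \leftrightarrow Y_j$. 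After this step, which has area linear in $\abs w$, the block has become $\overline{a_1} \cdot C_0 X_0^{\pm1} C_0 \cdot \overline{b_1}$, where $\overline{a_1}, \overline{b_1}$ denote the swapped words. Applying the identity above rewrites $C_0 X_0^{\pm1} C_0$ using a single $C_0$, so the whole block $C_0 w_1 C_0$ is replaced, at area a constant multiple of $\abs w^2$, by a word in $S_F$ that contains exactly one $C_0$.

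The word $w$ has thereby been transformed into a null-homotopic word with only two copies of $C_0$, of the shape $P \, C_0 \, Q \, C_0 \, R$ with $P, Q, R$ words in $S_F$ of length linear in $\abs w$. A cyclic permutation, which preserves area, brings this into the form $C_0 \cdot w_1'(S_F) \cdot C_0 \cdot w_2'(S_F)$ handled by the previous proposition, and applying that proposition bounds the remaining area by a constant multiple of $\abs w^2$. Adding up the areas of all the steps yields $\Area(w) \leq C \abs{w}^2$.

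The step I expect to require the most care is the elimination itself: the normal form of \cref{bound-nice-f} is precisely what guarantees that, after the shuffles, a single letter $X_0^{\pm1}$ is caught between the two inner copies of $C_0$, so that the order-three identity applies cleanly. The case in which $w_1$ has no $X_0$ is vacuous for our purposes: there the block $C_0 w_1 C_0$ collapses to an $S_F$-word and $w$ would reduce to a null-homotopic word with a single $C_0$, which is impossible since any such word sends $0$ to $\tfrac12 \neq 0$.
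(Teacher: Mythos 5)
Your proposal is correct and follows essentially the same route as the paper: rewrite the block trapped between two copies of $C_0$ into the normal form of \cref{bound-nice-f}, shuffle the $X_j, Y_j$ letters across $C_0$ via \cref{relation-shuffle-one,relation-shuffle-two}, apply $C_0 X_0^{\pm1} C_0 = X_0^{\mp1} C_0 X_0^{\mp1}$ from \cref{relation-order-three} to drop to two copies of $C_0$, and invoke the previous proposition. The only (immaterial) differences are that the paper processes $w_2$ rather than $w_1$, which lands directly in the two-$C_0$ form without your final cyclic permutation, and it rules out the no-$X_0$ case by noting that $f_2(\tfrac12)=\tfrac12$ would force $w(0)=\tfrac12$.
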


\begin{proof}
	Let $f_2 \in F$ be the element represented by $ w_2 $. If $f_2(\frac12)=\frac12$, then it would follow that $ w(0) = \frac12 $, so $w$ did not represent the identity of $T$. Therefore, assume that $f_2^{-1}(\frac12) > \frac12$ (the other case is equivalent by taking the inverse of $w$), and let $u(Y_1,Y_2) \cdot X_0 \cdot  v(X_1,X_2) $ by the representative for $f_2$ given by \cref{bound-nice-f}.

	We can replace $w_2$ by $u \cdot X_0 \cdot v$ using a number of relations that is at most quadratic in $\abs{w_2}$, since the Dehn function of $F$ is quadratic. We obtain
	\[
		C_0 \cdot w_1(S_F) \cdot C_0 \cdot u(Y_1,Y_2) \cdot X_0 \cdot v(X_1,X_2) \cdot C_0 \cdot  w_3(S_F).
	\]

	Now use relations \cref{relation-shuffle-one,relation-shuffle-two} to shift $u$ to the left of $C_0$, and $v$ to the right of the other $C_0$. We get
	\[
		C_0 \cdot w_1(S_F) \cdot u(X_1, X_2) \cdot C_0 \cdot X_0 \cdot C_0 \cdot v(Y_1,Y_2) \cdot w_3(S_F).
	\]
	Using $ (C_0X_0)^3 = 1 $ we have that $ C_0X_0C_0 = X_0^{-1} C_0 X_0^{-1} $. After this substitution, the word contains only two $C_0$ letters, so we conclude by the previous lemma.
\end{proof}

The proof of \cref{t-dehn-quadratic} is now complete.

\section{Higman-Thompson groups}\label{section-tn}

We now generalize the result to the Higman-Thompson groups $ T_n $. They are defined for $ n\geq 2 $ as the groups of piecewise-linear homeomorphisms of the circle $ [0,1]/\sim $, whose singularities are $n$-adic numbers (that is, of the form $ \frac{a}{n^b} $ for some $ a,b \in \NN $) and with derivative at every other point of the form $ n^k $ for some $ k \in \ZZ $ \cite[Section 4B]{BrownFiniteness}. The subgroup of $ T_{n} $ of homeomorphism that fixes $0$ is denoted by $ F_{n} $. These are generalizations of $ T $ and $F$, since $ T=T_{2} $ and $ F=F_{2} $.

It is convenient to keep in mind the description of elements of $F_n$ as \emph{tree diagrams} \cite{BurilloClearyStein}. Every finite rooted $n$-ary tree is in natural correspondence to a subdivision of $ [0,1] $ in intervals of the form $ [\frac a{n^k}, \frac{a+1}{n^k}] $. Given two finite rooted $n$-ary trees with the same number of leaves, we may construct a unique map $f \in F_n$ that sends the $i$-th interval to the first subdivision to the $i$-th interval of the second subdivision via an affine function. This pair of trees is usually referred to as a \emph{tree diagram} for $f$.

Given a tree diagram, one may choose an integer $i$ and append an $n$-caret (i.e. an $n$-ary tree with only the root and $n$ leaves) to the $i$-th leaf on both trees; doing so does not change the associated map. A tree diagram is called \emph{reduced} if it is not obtained from another tree diagram by applying the previous operation.

The word length of an element $ f \in F_{n} $ can be estimated by using the \emph{number of carets} $ N(f) $ appearing in either tree of the reduced diagram.

\begin{theorem}[{\cite[Theorem 5]{BurilloClearyStein}}]
	\label{bound-via-leaves}
	There is a constant $C>0$ such that for all $ f \in F_{n} $ then
	\[
		\frac1C N(f) \leq \norm{f}_{F_{n}} \leq CN(f).
	\]
\end{theorem}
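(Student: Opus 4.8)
The plan is to establish the two inequalities in \cref{bound-via-leaves} separately, mirroring the argument that underlies the interval-subdivision estimate already used for $F$ in \cref{section-f}. The key idea is that the number of carets $N(f)$ in the reduced tree diagram is, up to multiplicative and additive constants, the same as the word length $\norm{f}_{F_n}$. For the upper bound $\norm{f}_{F_n} \leq C N(f)$, I would exhibit an explicit generating set for $F_n$ consisting of elements whose tree diagrams have a bounded number of carets, and then argue that any $f$ can be written as a product of at most $O(N(f))$ such generators. Concretely, one builds $f$ by a sequence of elementary moves, each adding or removing a single caret from one of the two trees; since each such move corresponds to multiplication by a bounded-length word in the generators, a diagram with $N(f)$ carets is reachable in $O(N(f))$ steps.

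For the lower bound $N(f) \leq C \norm{f}_{F_n}$, I would argue that the caret count cannot grow too quickly under multiplication by a single generator. That is, I would show there is a constant $D$ (depending only on $n$ and the chosen finite generating set) such that for every generator $s$ and every $g \in F_n$ we have $N(gs) \leq N(g) + D$. Granting this, an element $f$ represented by a geodesic word of length $\norm{f}_{F_n}$ satisfies $N(f) \leq D \cdot \norm{f}_{F_n}$ by induction on the word length, starting from $N(\id) = 0$. The subadditivity-type estimate for $N$ under multiplication follows from the observation that composing with a generator alters the subdivision only near finitely many points, so the reduced diagram of the product differs from that of $g$ by a controlled number of carets; the reduction process only decreases the caret count, so it does no harm to the inequality.

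The main obstacle I expect is making the reduction bookkeeping precise: when one multiplies two tree diagrams, the naive common refinement of the two subdivisions can have many carets, and one must argue that after cancelling all removable carets the net increase is bounded by a constant. This requires a careful analysis of how an unreduced diagram simplifies, and in the $n$-ary setting (as opposed to the binary case $n=2$) one must track that a caret has $n$ children rather than two, so that reduction removes a full $n$-caret at a time. Since this is precisely the content of the cited result \cite[Theorem 5]{BurilloClearyStein}, I would state it as a known theorem and quote it directly rather than reprove the combinatorial estimate; the correspondence between tree diagrams and interval subdivisions, already exploited in \cref{section-f}, shows that the same quasi-isometry between caret count and word length that holds for $F$ carries over verbatim to $F_n$.
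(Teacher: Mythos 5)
Your proposal is correct and consistent with the paper, which states this result as a citation to \cite[Theorem 5]{BurilloClearyStein} and gives no proof of its own; your concluding decision to quote the theorem rather than reprove it matches the paper exactly. The two-inequality sketch you give (bounded-length words realize single caret moves for the upper bound, and $N(gs)\leq N(g)+D$ for each generator $s$ for the lower bound) is indeed the standard argument behind the cited result.
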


For $n$-adic rationals $ 0 \leq a < b \leq 1 $, denote by $ F_{n}[a,b] $ the subgroup of $ F_{n} $ that contains all homeomorphism that are the identity outside $ (a,b) $. To ease notation, for $ i \in \range n $ we also denote with $ \Fn[i] $ the subgroup $ F_n[\frac{i-1}{n}, \frac in] $.

\begin{lemma}\label{fn-undistorted-rescaling}
	For every $ i \in \range{n} $, the subgroup $ \Fn[i]$ is isomorphic to $F_n$ and undistorted inside $ F_{n} $.
\end{lemma}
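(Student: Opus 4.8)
The plan is to follow the proof of \cref{undistorted-rescaling} for $F$, replacing the tree-diagram estimate used there by the caret-counting estimate of \cref{bound-via-leaves}. For the isomorphism, I would let $\phi_i$ denote the increasing affine bijection $x \mapsto nx-(i-1)$ from $[\frac{i-1}{n},\frac in]$ onto $[0,1]$, and consider the conjugation $g \mapsto \bar g := \phi_i \circ g \circ \phi_i^{-1}$. This carries a homeomorphism supported in $(\frac{i-1}{n},\frac in)$ to a self-homeomorphism of $[0,1]$. Since $\phi_i$ scales by the factor $n$, it sends $n$-adic points to $n$-adic points, and the slope of $\bar g$ at a point equals the slope of $g$ at the corresponding point (the factors $n$ and $n^{-1}$ contributed by $\phi_i$ and $\phi_i^{-1}$ cancel), so the derivative of $\bar g$ is again a power of $n$. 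Hence $\bar g \in F_n$, and conjugation by $\phi_i$ is an isomorphism from $\Fn[i]$ onto $F_n$. Transporting a generating set through this isomorphism gives $\norm{g}_{\Fn[i]} = \norm{\bar g}_{F_n}$ for every $g \in \Fn[i]$.

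For undistortedness I would compare the number of carets $N(g)$ of $g$ viewed in $F_n$ with the number of carets $N(\bar g)$ of $\bar g$ in $F_n$. The key observation is that
\[
	N(g) = N(\bar g) + 1.
\]
Indeed, since $g$ is the identity outside the $i$-th top-level interval $[\frac{i-1}{n},\frac in]$, a tree diagram for $g$ is obtained by placing a single $n$-caret at the root (on both the domain and the range tree) and grafting the reduced tree diagram of $\bar g$ at the $i$-th leaf, leaving the remaining $n-1$ leaves bare. This diagram is reduced: the bare leaves admit no cancellation, the grafted portion is reduced by assumption, and the root caret cannot be cancelled because (for $g \neq \id$) its $i$-th child is an internal vertex. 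Thus the reduced diagram of $g$ has exactly one more caret than that of $\bar g$.

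Combining this with \cref{bound-via-leaves} applied to both $g$ and $\bar g$ in $F_n$ yields, for suitable constants,
\[
	\norm{g}_{\Fn[i]} = \norm{\bar g}_{F_n} \le C\, N(\bar g) = C\,(N(g)-1) \le C\, N(g) \le C^2 \norm{g}_{F_n},
\]
and symmetrically $\norm{g}_{F_n} \le C\, N(g) = C\,(N(\bar g)+1) \le C^2 \norm{g}_{\Fn[i]} + C$, which is precisely the assertion that $\Fn[i]$ is undistorted. The one point deserving care is the verification that the grafted diagram is reduced, that is, that isolating the interval $[\frac{i-1}{n},\frac in]$ costs only a single caret and introduces no spurious cancellation; once this is granted, the rest is a direct transcription of the argument for $F$ together with \cref{bound-via-leaves}.
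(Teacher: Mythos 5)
Your proof is correct and follows essentially the same route as the paper: identify $\Fn[i]$ with $F_n$ via the affine rescaling, observe that this changes the reduced tree diagram by exactly one root caret so that the caret counts differ by $1$, and conclude undistortion from \cref{bound-via-leaves}. Your additional verifications (that conjugation by the rescaling preserves $n$-adic breakpoints and power-of-$n$ slopes, and that the grafted diagram is reduced) are points the paper leaves implicit, but the argument is the same.
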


\begin{proof}
	The isomorphism is induced by linearly rescaling $ [0,1] $ to $ [\frac {i-1}n, \frac{i}n] $. This sends the map $ f \in F_n $ with associated tree diagram $(\tree_1,\tree_2)$ to the map $ f' \in \Fn[i] $ with tree diagram $ (\tree_1',\tree_2') $, where for $ j \in \range 2 $ the tree $\tree_j'$ is obtained by attaching the root of $\tree_j$ to the $i$-th leaf of an $n$-caret.

	In particular $ N(f')=N(f) + 1 $, so by \cref{bound-via-leaves} we get that the map above is a quasi-isometric embedding, i.e.~the subgroup is undistorted.
\end{proof}

Denote by $ \tree[i] $ the tree obtained by attaching an $n$-caret to the $i$-th leaf of another $n$-caret. Following \cite[Section 4.11]{BrownFiniteness} define for $ i,j \in \range{n} $ the \emph{glide} $ \gamma_{i,j} \in F_n $ as the element with associated tree diagram $ (\tree[i], \tree[j]) $, and let $ \gamma_i := \gamma_{i,i+1} $ for $ i \in \range{n-1} $.
Let $ \rho \in T_{n} $ be the rotation of order $n$ defined by $ x \mapsto x+\frac 1n \pmod \ZZ $.

\begin{figure}
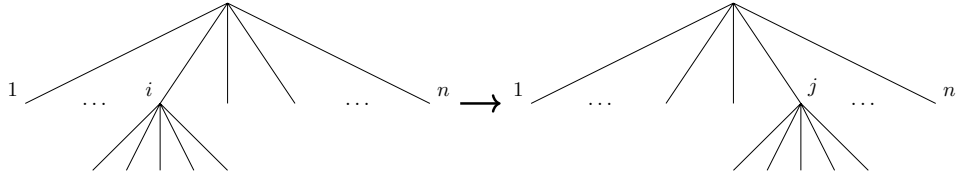

	\includestandalone[width=\textwidth]{pictures/tree-diagram-glide}
	\caption{The tree diagram for the glide $ \gamma_{i,j} $.}
	\label{tree-diagram-glide}
\end{figure}

\begin{remark}
	When comparing with \cite{BrownFiniteness}, be careful that we are using the convention that maps are applied from right to left, while in \cite{BrownFiniteness} they are applied from left to right.

	Moreover, Brown considers a more general class of groups $ T_{n,r} $, that one may interpret as self-homeomorphisms of the circle described as the interval $ [0,r] / \sim $. Here we are looking in the special case $ n=r $: it is easy to see that $ T_n = T_{n,1} \isom T_{n,n} $ by rescaling.
\end{remark}

\begin{remark}
	In the case $ n=2 $, then $ \gamma_1 = X_0^{-1}$ with respect to the notation in the previous section.
\end{remark}

Both $F_n$ and $T_n$ are finitely presented. We will not need the explicit presentation to compute the Dehn function; below we recall some useful properties.

\begin{lemma}\label{tn-rels}
	The following hold.
	\begin{enumerate}
		\item $ \gamma_{j,k}\gamma_{i,j}$ = $ \gamma_{i,k} $;
		\item $ \rho \gamma_i \rho^{-1} = \gamma_{i+1} $ for $ i \in \range{n-2} $;
		\item $ \rho^{-k} \gamma_{k}^{-1} \rho^{k} = \gamma_{1,n-1} \rho^{-1} \gamma_{1,n-1}$ for $ k \in \range{n-1} $; \label{tn-rel-3}
		\item $ \rho \Fn[i] \rho^{-1} = \Fn[i+1] $ for $ i \in \range{n-1} $;
		\item $ \rho \Fn[n] \rho^{-1} = \Fn[1] $;
		\item $ \gamma_i^{-1} \Fn[i] \gamma_i \subset \Fn[i]$; \label{tn-rel-6}
		\item $ \gamma_i \Fn[i+1]\gamma_i^{-1} \subset \Fn[i+1]$. \label{tn-rel-7}
	\end{enumerate}
\end{lemma}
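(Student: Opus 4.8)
The plan is to verify each of the seven statements directly, using that every element of $F_n$ is encoded by a pair of $n$-ary trees and every element of $T_n$ by such a pair together with a cyclic shift of the leaves, and that $\gamma_{i,j}$, $\gamma_i$ and $\rho$ all have completely explicit descriptions as piecewise-linear maps of the circle. Throughout I would track two pieces of data for each map: its effect on the standard subdivision of $[0,1]$ into the intervals $[\frac{i-1}{n},\frac in]$, and its support. Statement (1) is then immediate from the tree-diagram calculus: the codomain tree of $\gamma_{i,j}$ is $\tree[j]$, which is literally the domain tree of $\gamma_{j,k}$, so (applying maps right to left) the composite $\gamma_{j,k}\gamma_{i,j}$ has diagram $(\tree[i],\tree[k])$, i.e.\ it equals $\gamma_{i,k}$.

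Statements (2), (4) and (5) are conjugations by the rotation. Since $\rho$ cyclically carries $[\frac{i-1}{n},\frac in]$ onto $[\frac in,\frac{i+1}{n}]$ (and the last interval onto the first), conjugation by $\rho$ relabels the trees of $\gamma_i$ by one and, being an automorphism of $T_n$, carries the support of any $f\in\Fn[i]$ exactly onto the rotated interval; matching supports gives (4) and (5), and matching the relabelled trees gives (2). Here the hypothesis $i\le n-2$ is precisely what guarantees that no wraparound past $0$ occurs, so the conjugate is again one of the defined glides $\gamma_{i+1}$. Statements (6) and (7) are support computations for the single glide $\gamma_i$: from its tree diagram one reads off that $\gamma_i$ is supported on $[\frac{i-1}{n},\frac{i+1}{n}]$, that it expands the first $n$-adic subinterval of $[\frac{i-1}{n},\frac in]$ onto all of $[\frac{i-1}{n},\frac in]$, and that it compresses $[\frac in,\frac{i+1}{n}]$ onto its last $n$-adic subinterval. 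Hence for $f\in\Fn[i]$ the support of $\gamma_i^{-1}f\gamma_i$ is the $\gamma_i^{-1}$-image of the support of $f$, which lies in the first subinterval of $[\frac{i-1}{n},\frac in]$ and in particular inside $[\frac{i-1}{n},\frac in]$, giving (6); the symmetric computation with the $\gamma_i$-image gives (7). The one-sided inclusions, rather than equalities, reflect that $\gamma_i$ maps these intervals strictly into themselves.

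I expect the main obstacle to be statement (3), the only relation in which the rotation drags a glide across the basepoint $0$, so that one cannot stay inside the tree-pair formalism of $F_n$. I would first use (2) to eliminate the dependence on $k$: iterating it gives $\gamma_k=\rho^{k-1}\gamma_1\rho^{-(k-1)}$ for $k\in\range{n-1}$, so that $\rho^{-k}\gamma_k^{-1}\rho^k=\rho^{-1}\gamma_1^{-1}\rho$ for every such $k$, consistently with the right-hand side of (3) being independent of $k$. It then remains to verify the single wraparound identity $\rho^{-1}\gamma_1^{-1}\rho=\gamma_{1,n-1}\rho^{-1}\gamma_{1,n-1}$, which I would check by writing both sides as explicit piecewise-linear maps and comparing their values and slopes on the common refinement of the relevant subdivisions near $0$. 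The delicate point — and the place where the precise glide indices and the direction of the rotation must be pinned down carefully, since small index shifts genuinely change the element — is exactly this bookkeeping across the wraparound; the advantage of the reduction above is that it isolates the entire difficulty into this one finite computation.
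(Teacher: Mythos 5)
Your proposal is correct and follows essentially the same route as the paper: direct verification from the explicit descriptions, with (6) and (7) resting on exactly the same image computations $\gamma_i^{-1}\bigl(\bigl[\tfrac{i-1}{n},\tfrac{i}{n}\bigr]\bigr)=\bigl[\tfrac{i-1}{n},\tfrac{i-1}{n}+\tfrac{1}{n^2}\bigr]$ and $\gamma_i\bigl(\bigl[\tfrac{i}{n},\tfrac{i+1}{n}\bigr]\bigr)=\bigl[\tfrac{i+1}{n}-\tfrac{1}{n^2},\tfrac{i+1}{n}\bigr]$ that the paper invokes, and (3) settled by a direct tree-diagram/piecewise-linear check, which is one of the two options the paper offers. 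Your reduction of (3) to the single case $k=1$ by iterating (2) is a small refinement the paper does not make, but it does not change the character of the argument.
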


\begin{proof}
	Most of the computations are straightforward. For \cref{tn-rel-3}, one may either check the equality directly using tree diagrams or obtain it from expression (iv) in \cite[Section 4.11]{BrownFiniteness}, where in our case $ r=n $.

	For \cref{tn-rel-6} and \cref{tn-rel-7} we use that $ \gamma_i^{-1}([\frac{i-1}n, \frac{i}n]) =  [\frac{i-1}n, \frac{i-1}n + \frac1{n^2}] $ and $ \gamma_i([\frac{i}n, \frac{i+1}n]) = [\frac{i+1}n - \frac{1}{n^2}, \frac{i+1}n] $.
\end{proof}

We now generalize the arguments in \cref{section-f} to the case $ n>2 $. The only substantial difference is that we need to be careful as the action of $ F_n $ is not transitive on the $ n $-adic numbers.

\begin{lemma}\label{orbit-fn}
	Let $ x=\frac{a}{n^k}, y=\frac{b}{n^\ell} $ be two $n$-adic numbers in $ (0,1) $. There exists $ f \in F_n $ with $ f(x)=y $ if and only if $ a \equiv b \pmod{n-1}$.

	Moreover, if such an $f$ exists, it can be chosen such that $ \norm f_{F_n} \leq C(k+\ell) $, for some constant $C>0$ independent of $x$ and $y$.
\end{lemma}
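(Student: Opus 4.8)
The plan is to find the invariant that distinguishes the orbits and then to realise the orbit relation by explicit short elements. The key object is the ring homomorphism $\phi \colon \mathbb{Z}[1/n] \to \ZZ[n-1]$ determined by $\phi(1/n) = 1$; it is well defined because $n \equiv 1 \pmod{n-1}$ makes $n$ a unit mod $n-1$, and it satisfies $\phi(a/n^k) = a \bmod (n-1)$ for every representation of an $n$-adic rational. Thus $a \equiv b \pmod{n-1}$ is precisely $\phi(x) = \phi(y)$, a condition independent of the chosen representations. The statement then splits into (i) $\phi$ is constant on $F_n$-orbits (the ``only if'' direction) and (ii) $\phi$ separates the orbits, together with the quantitative bound (the ``if'' direction).

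For (i) I would show the stronger claim $\phi(f(u)) = \phi(u)$ for all $f \in F_n$ and all $n$-adic $u \in (0,1)$. Let $0 = t_0 < \dots < t_r = 1$ be the breakpoints of $f$, so on each $[t_i, t_{i+1}]$ we have $f(t) = n^{s_i} t + c_i$ with $s_i \in \ZZ$ and $c_i \in \mathbb{Z}[1/n]$. Because $\phi$ is a ring homomorphism with $\phi(n) = 1$, multiplication by any power of $n$ preserves $\phi$, so only the additive constants $c_i$ can change it. Inducting on $i$ from $f(t_0) = f(0) = 0$, the identity $c_i = f(t_i) - n^{s_i} t_i$ gives $\phi(c_i) = \phi(f(t_i)) - \phi(t_i) = 0$, and hence $\phi(f(t_{i+1})) = \phi(t_{i+1})$; applying this on the piece containing an arbitrary $u$ finishes the claim.

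For (ii), by (i) it is enough to move each of $x$ and $y$ to a level-one point $j/n$, $j \in \range{n-1}$, by an element of controlled norm. Given $x = a/n^k$, construct the domain tree $\tree_1$ greedily, attaching one caret at a time to the subinterval that contains $x$; after at most $k$ steps $x$ becomes the right endpoint of some leaf, say the $p$-th, and $\tree_1$ has $N_1 \le k$ carets and $m = 1 + N_1(n-1)$ leaves. For the range tree take any $n$-ary tree containing the root caret (so that each $j/n$ is a leaf boundary) with $p$ leaves lying in $[0, j/n]$, where $j \in \range{n-1}$ is the representative of $p \bmod (n-1)$. The associated element $f$ sends the $p$-th leaf to the $p$-th leaf affinely, so $f(x) = j/n$, and by (i) necessarily $j \equiv a \pmod{n-1}$. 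Since an $n$-ary tree with $m$ leaves has exactly $(m-1)/(n-1) = N_1$ carets, the diagram has at most $k$ carets, and \cref{bound-via-leaves} yields $\norm{f}_{F_n} \le Ck$. Performing the same construction for $y$ lands it on the level-one point of residue $b \equiv a$, i.e. the same $j/n$; composing the two maps gives the required element with norm at most $C(k+\ell)$.

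The step I expect to require the most care is the existence of the range tree with a level-one breakpoint in the prescribed leaf position $p$. This amounts to placing $p$ leaves in $[0, j/n]$ and $m - p$ in $[j/n, 1]$, which is possible exactly when $p \ge j$ and $m - p \ge n - j$ together with the matching congruences mod $n-1$; choosing $j \in \range{n-1}$ as the residue of $p$ makes the congruences automatic, and the inequalities follow from the elementary observation that the representative in $\range{n-1}$ of a positive integer never exceeds it (applied to both $p$ and $m - p$). The feature that makes the norm bound painless is that the number of carets of an $n$-ary tree is forced by its number of leaves, so reshaping the range tree costs nothing beyond the $k$ carets already present.
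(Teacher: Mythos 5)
Your proof is correct, but it takes a genuinely different route from the paper's in the ``only if'' direction. The paper establishes the invariance of $a \bmod (n-1)$ combinatorially: it shows that for any $n$-ary tree containing $x$, the number of leaves to the left of $x$ is congruent to $a$ modulo $n-1$ (via the digit sum of $x$ in base $n$ for the minimal tree, together with the observation that attaching a caret adds $n-1$ leaves), and then reads the invariance off a tree diagram for $f$ whose domain tree contains $x$ and whose range tree contains $y$. You instead package the invariant as the ring homomorphism $\mathbb{Z}[1/n] \to \mathbb{Z}/(n-1)\mathbb{Z}$ and prove its $F_n$-invariance by induction over the breakpoints, using only that the slopes are powers of $n$ and that $f(0)=0$; this is arguably cleaner, makes the well-definedness of the invariant (independence of the representation $a/n^k$) automatic, and avoids tree diagrams entirely for this half. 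For the ``if'' direction and the norm bound the two arguments are closer in spirit: both build an explicit tree diagram with $O(k+\ell)$ carets and invoke \cref{bound-via-leaves}. The paper balances the two minimal trees $\tree_x$ and $\tree_y$ directly against each other by padding leaves on either side of $x$ and of $y$ until the counts match; you factor through the level-one point $j/n$, sending each of $x$ and $y$ there by an element with at most $k$ (respectively $\ell$) carets and composing. Your factorization costs nothing quantitatively and has the small advantage of matching how the lemma is used later, where one wants to move a point to a prescribed target; the congruence and inequality bookkeeping you flag ($p\ge j$, $m-p\ge n-j$, and the matching residues modulo $n-1$) is exactly right and does close the only delicate step in the construction of the range tree.
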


\begin{proof}
	We may assume that $a$ and $b$ are not multiple of $n$, as dividing by $n$ does not change the residual class modulo $n-1$.

	Say that an $n$-ary tree contains an $n$-adic rational $q$ if $q$ is an endpoint of the corresponding subdivision. For such a tree, the leaves left (right) of $q$ are the leaves corresponding to intervals that are to the left (right) of $q$.

	We show that if a tree contains $x$, the residual class modulo $n-1$ of the number of leaves left of $x$ is the same as $a$.
	To this end, consider the minimal $n$-ary tree $ \tree_x $ that contains $x$. It is immediate to note that the number of leaves to the left of $x$ is the sum of the digits of $x$ written in base $n$; this has the same residual class modulo $n-1$ as $a$. Every other tree that contains $x$ is obtained by attaching carets to $\tree_x$; since attaching a caret increases the number of leaves by $n-1$, the residual class modulo $n-1$ of the number of leaves left of $x$ is the same as $a$ for every tree containing $x$. Similarly, the number of leaves left to $y$ in a tree containing $y$ is congruent to $ b  $ modulo $n-1$.

	If $f \in F$ sends $x$ to $y$, we can find a tree diagram $(\tree,\tree')$ for $ f $. By possibly attaching carets, we may assume that $ \tree $ contains $x$ and $ \tree' $ contains $y$. Since $x$ is sent to $y$, the number of leaves of $\tree$ left of $x$ coincides with the number of leaves of $\tree'$ left of $y$, and so $ a \equiv b \pmod{n-1} $.

	Vice versa, assume $ a \equiv b \pmod{n-1} $ and consider the minimal trees $ \tree_x $ and $ \tree_y $: they have respectively $ k $ and $ \ell $ carets. Let $L_x$ and $R_x$ be the number of leaves respectively left and right of $x$ in $\tree_x$, and $ L_y $, $ R_y $ be the number of leaves left and right of $y$ in $\tree_y$.

	If $ n-1 $ divides $ a-b $, then it also divides $ L_x-L_y $ by the arguments above. Since $ L_x+R_x \equiv L_y+R_y \equiv 1 \pmod{n-1} $, then $n-1$ also divides $ R_x-R_y $.

	If $ L_x \leq L_y $, we attach $ \frac{L_y-L_x}{n-1} $ carets to the left of $x$ in $T_x$; otherwise, we attach $ \frac{L_x-L_y}{n-1} $ to the left of $y$ in $T_y$. After that, the numbers of leaves to the left of $x$ in $T_x$ and to the left of $y$ in $ \tree_y $ coincide and are equal to
	\[
		\max\set{L_x, L_y} \leq L_x + L_y - 1.
	\]
	We perform the same procedure on the right. In the end, we obtain two trees forming a tree diagram $ (\tree_x', \tree_y') $. The map associated to it sends $x$ to $y$ by construction; the tree diagram has at most $ L_x + R_x - 1 + L_y + R_y -1$ leaves and therefore at most $ k + \ell $ carets, so the estimate follows.
\end{proof}

Let $ S_1 = \set{s_{1,1}, \dots s_{1,\ell}} $ be a finite set of generators for the subgroup $\Fn[1]$, and for $ k \in \range[2]{n} $ let $ S_k = \set{s_{k,1}, \dots, s_{k, \ell}} $ be a generating set for $ \Fn[k] $ defined by $ s_{k,i} = \rho^{k-1} s_{1,i} \rho^{-(k-1)} $.
Let $S_{F_n}$ be $ S_1 \cup \dots S_{n} \cup \set{\gamma_1^{\pm1}, \dots, \gamma_{n-1}^{\pm1}}$, and $ S_{T_n} = S_{F_n} \cup \set{\rho^{\pm1}}$.

\begin{proposition}\label{normal-form-fn}
	The set $ S_{F_n} $ generates $ F_n $. Moreover, there exists a constant $ C > 0 $ such that every element $f \in F_n$ is represented by a word $w(S_{F_n})$ of length at most $ C \cdot \norm f_{F_n} $ with the following property: for every $ k \in \range{n-1} $ there is at most one instance of $ \gamma_k^{\pm 1} $, and $ \gamma_k^{\pm 1} $ appears in $w$ if and only if $ f(\frac kn) \neq \frac kn $.
\end{proposition}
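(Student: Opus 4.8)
The plan is to imitate the proof of \cref{bound-nice-f}: peel off one moved breakpoint at a time, spending a single glide on each. Write $a_k := f^{-1}(k/n)$ and $J_k := (\frac{k-1}n, \frac kn)$, and note that $f$ moves $k/n$ exactly when $a_k \neq k/n$. I will induct on the number $m$ of moved breakpoints. In the base case $m = 0$ the map $f$ fixes every $k/n$, hence lies in the subgroup $H := \Fn[1] \times \dots \times \Fn[n]$ of elements fixing all breakpoints, the analogue of $F[0,\frac12] \times F[\frac12,1]$ from the case $n = 2$. As the factors have disjoint supports, the reduced diagram of $f = f_1 \cdots f_n$ (with $f_i \in \Fn[i]$) is obtained by grafting those of the $f_i$ onto a single $n$-caret, so \cref{bound-via-leaves} gives $N(f) = 1 + \sum_i N(f_i)$; together with \cref{fn-undistorted-rescaling} this shows that $H$ is undistorted, and $f$ is represented by a glide-free word in $S_1 \cup \dots \cup S_n$ of length at most $C\norm{f}_{F_n}$.

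The inductive step rests on two ingredients. The first is a single-breakpoint lemma generalising \cref{stabilize-one-half}: if an $n$-adic point $a$ lies in an interval adjacent to $k/n$, that is $a \in J_k \cup J_{k+1}$, and $a$ is compatible with $k/n$ as in \cref{orbit-fn}, then there is $h \in F_n$ supported on $[\frac{k-1}n, \frac{k+1}n]$ with $h(a) = k/n$, of the form $\gamma_k \cdot w(S_k)$ when $a \in J_k$ and $\gamma_k^{-1} \cdot w(S_{k+1})$ when $a \in J_{k+1}$, whose length is bounded linearly in the exponent appearing in $a$. This is proved exactly like \cref{stabilize-one-half}: an element of the adjacent factor $\Fn[k]$ (respectively $\Fn[k+1]$) first carries $a$ to the ready position $\frac{k-1}n + \frac1{n^2}$ (respectively $\frac{k+1}n - \frac1{n^2}$), and a single glide then lands it on $k/n$; the length estimate comes from \cref{orbit-fn}, and the compatibility hypothesis is automatic in our application since $a = a_k$ and $f$ itself sends $a_k$ to $k/n$.

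The second ingredient, which is the combinatorial heart of the argument, is the claim that whenever $m > 0$ some moved breakpoint $k$ has its source $a_k$ in an adjacent interval $J_k \cup J_{k+1}$. Let $h(k)$ be the number of breakpoints strictly below $a_k$; it is non-decreasing with $h(0) = 0$ and $h(n) = n - 1$, and $a_k$ is adjacent to $k/n$ precisely when $h(k) \in \set{k-1, k}$. Putting $D(k) := h(k) - k + 1$ one has $D(0) = 1$, $D(n) = 0$, and $D(k) - D(k-1) \geq -1$, so $D$ can fall by at most one at each step. If the set $\set{k \in \range{n-1} : D(k) \geq 1}$ is non-empty, its maximum $k^*$ is forced to satisfy $D(k^*) = 1$, and then $k^*$ is both moved and adjacent from above; otherwise $D \leq 0$ throughout $\range{n-1}$ and the smallest moved breakpoint $k_0$ satisfies $h(k_0) = k_0 - 1$, so $a_{k_0} \in J_{k_0}$ and $k_0$ is adjacent from below.

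Granting these, choose a moved adjacent breakpoint $k$ and the corresponding $h$ with $h(a_k) = k/n$. Then $f' := f\,h^{-1}$ fixes $k/n$ and agrees with $f$ on every other breakpoint (because $h$ is supported on $[\frac{k-1}n, \frac{k+1}n]$), so it has exactly $m - 1$ moved breakpoints, while $\norm{f'}_{F_n} \leq \norm{f}_{F_n} + \norm{h}_{F_n} \leq C'\norm{f}_{F_n}$, using that the exponent of $a_k$ is at most $C\,N(f) \leq C'\norm{f}_{F_n}$ by \cref{bound-via-leaves}. By induction $f'$ has a word of the desired form, using each glide $\gamma_j$ with $j \neq k$ moved at most once and no $\gamma_k$; appending the word $\gamma_k^{\pm 1} w(S_k \text{ or } S_{k+1})$ for $h$ represents $f = f' h$ and introduces exactly one further glide $\gamma_k$. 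Since there are at most $n - 1$ steps and $n$ is fixed, the length stays linear in $\norm{f}_{F_n}$; in particular every element of $F_n$ is a word in $S_{F_n}$, which proves the generation statement. The ``if and only if'' is then immediate: $\gamma_k$ is introduced only while peeling the moved breakpoint $k$, so it appears exactly for moved $k$; conversely, were $\gamma_k$ absent, $w$ would be a product of generators all fixing $k/n$ (the $S_i$ fix every breakpoint and $\gamma_j$ fixes $k/n$ for $j \neq k$ by its support), forcing $f(k/n) = k/n$. The step I expect to be most delicate is the adjacency claim, together with checking that $f' = f\,h^{-1}$ removes precisely the breakpoint $k$ from the moved set and disturbs no other.
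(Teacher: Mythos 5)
Your proof is correct and follows essentially the same route as the paper: peel off one moved breakpoint per glide via a decomposition $f = f' \circ \gamma_k^{\pm1} \circ f_k$ with $f_k$ in an adjacent factor, using \cref{orbit-fn} for the congruence and the length control, and undistortion of $\Fn[1]\times\dots\times\Fn[n]$ for the glide-free base case. The only real difference is how the breakpoint to peel is chosen: the paper simply takes the \emph{least} moved $k$ (after possibly replacing $f$ by $f^{-1}$), for which $f^{-1}(\tfrac kn)\in(\tfrac{k-1}n,\tfrac kn)$ is automatic because $f$ fixes $\tfrac{k-1}n$, so your discrepancy-function argument for the adjacency claim, while correct, is more machinery than is needed.
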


\begin{proof}
	Note that if $ f $ fixes all the points of the form $ \frac kn $, then the result follows from the fact that $ \Fn[1] \times \dots \times \Fn[n] $ is undistorted inside $ \Fn $ by \cref{fn-undistorted-rescaling}.

	Otherwise, let $ k \in \range{n-1}$ be the least such $ f(\frac kn) \neq \frac kn $. It suffices to prove the following claim.

	\emph{Claim.}  We can decompose $f$ as either $ f' \circ \gamma_k \circ f_k $ or $ f_k \circ \gamma_k^{-1} \circ f' $, where $ f' $ fixes $ \frac in $ for $ i \in \range k$ and $ f_k \in \Fn[k] $ with $ \norm {f_k} \leq C \norm f$.

	Indeed, if we assume the claim, we may apply it first on $f$, then on $f'$, and so on, until we obtain a decomposition of $f$ into a product of some $ f_k \in \Fn[k] $ and some $ \gamma_k^{\pm1} $ (in some order), with each $ \gamma_k^{\pm 1} $ appearing at most once. The last part of the statement of the proposition follows from the fact that $ \gamma_k $ is the only generator that does not fix $ \frac kn $.

	Let us prove the claim. By possibly replacing $f$ with its inverse, we may assume that $ f^{-1}(\frac kn) < \frac kn $. Since $ f $ fixes $ \frac{k-1}n $, then it also means that $ f^{-1}(\frac kn) > \frac{k-1}{n} $. We can write $ f^{-1}(\frac kn) $ as $ \frac{k-1}n + \frac d{n^\ell} $.

	By \cref{orbit-fn}, we get that $ d \equiv 1 \pmod{n-1} $. Using the same lemma we may construct $ f_k \in \Fn[k] $ that sends $ f^{-1}(\frac kn) $ to $\frac{k-1}{n} + \frac 1 {n^2}$, and this can be chosen with norm bounded linearly in terms of $\ell$, and therefore in terms of $ \norm f_{F_n} $ by \cref{bound-via-leaves,fn-undistorted-rescaling}.

	So $f' := f \circ f_k^{-1} \circ \gamma_k^{-1} $ fixes the point $ \frac kn $, and clearly also fixes $ \frac in $ for $i<k$. Therefore we get
	\[
		f = f' \circ \gamma_k \circ f_k
	\]
	as required.
\end{proof}

\begin{proposition}\label{normal-form-tn}
	Every $ f \in T_n $ is represented by a word of the form
	\[
		w(S_{F_n}) \cdot \rho^{-k} \cdot \gamma \cdot v(S_{m}),
	\]
	for some $ k \in \range[0]{n-1}$, some $ m \in \range{n} $, and some glide $ \gamma \in F_n $ (possibly the trivial one). Moreover, $w$ and $v$ can be chosen of length at most $ C \cdot \norm f_{T_n} $, for some constant $C>0$ independent of $f$.
\end{proposition}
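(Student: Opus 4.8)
The plan is to mimic the proof of \cref{nice-t}, reducing to \cref{normal-form-fn} by absorbing the action on $0$ into a single rotation. If $f$ fixes $0$ then $f \in F_n$ and we are done by \cref{normal-form-fn} with $k=0$ and both $\gamma$ and $v$ trivial. So assume $f \notin F_n$ and set $a := f^{-1}(0) \in (0,1)$, say $a \in (\frac{m-1}n, \frac mn)$, which fixes the index $m \in \range{n}$. Writing $a = \frac{d}{n^\ell}$ with $n \nmid d$, I will produce an element $g \in F_n$ of the prescribed shape $\gamma \circ v$, with $v \in \Fn[m]$ and $\gamma$ a glide, such that $g(a) = \frac kn$ for a suitable $k \in \range{n-1}$; then $\rho^{-k} \circ g$ sends $a$ to $0$, and the non-fixing of $0$ has been localized to the factor $\rho^{-k}$.

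The element $g$ is built in two steps dictated by the residue of $d$ modulo $n-1$. By \cref{orbit-fn} applied inside $\Fn[m]$ through the rescaling of \cref{fn-undistorted-rescaling}, there is a unique $t \in \range{n-1}$ with $t \equiv d-(m-1) \pmod{n-1}$ for which some $v \in \Fn[m]$ sends $a$ to the level-two point $p := \frac{m-1}n + \frac{t}{n^2}$; moreover $v$ can be taken with $\norm{v}_{\Fn[m]} \leq C\ell$. A direct inspection of tree diagrams shows that a single glide $\gamma = \gamma_{m,j}$ carries $p$ to a level-one point $\frac kn$: the point $p$ is the right endpoint of the $t$-th small leaf of $\tree[m]$, and choosing $j$ larger or smaller than $m+t-1$ sends this endpoint to $\frac{m+t-1}n$ or to $\frac{m+t-n}n$ respectively, so that in either case $k \in \range{n-1}$ with $k \equiv d \pmod{n-1}$ (the boundary values, where $a$ is itself a level-one or level-two point, make $v$ or $\gamma$ trivial). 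Setting $g := \gamma \circ v$ we have $g(a) = \frac kn$, and since $\gamma$ is a product of at most $n-1$ of the generators $\gamma_i^{\pm1}$, we obtain $\norm{g}_{F_n} \leq C\ell + C$.

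The depth $\ell$ is controlled by $\norm{f}_{T_n}$: applying the $T_n$-analogue of \cref{lower-bound-norm-t-by-depth} (which follows from the same leaf count \cref{bound-via-leaves}) to $f^{-1}$ gives $\ell \leq C\norm{f}_{T_n} + C$, hence $\norm{g}_{F_n} \leq C\norm{f}_{T_n} + C$. Now $h := f \circ g^{-1} \circ \rho^{k}$ fixes $0$, so $h \in F_n$, and since $F_n$ is undistorted in $T_n$ we get $\norm{h}_{F_n} \leq C\norm{h}_{T_n} \leq C(\norm{f}_{T_n} + \norm{g}_{F_n} + 1) \leq C'\norm{f}_{T_n} + C'$. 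Representing $h$ by a word $w(S_{F_n})$ via \cref{normal-form-fn} and $v$ by a word $v(S_m)$, and recalling that $f = h \circ \rho^{-k} \circ \gamma \circ v$, we obtain exactly the desired expression $w(S_{F_n}) \cdot \rho^{-k} \cdot \gamma \cdot v(S_m)$, with every factor of length at most a constant multiple of $\norm{f}_{T_n}$.

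The main obstacle is the middle step: arranging $g$ to have the exact form ``single glide times an element supported on one subinterval'' while still sending $a$ to a genuine level-one point $\frac kn$. This forces the residue bookkeeping above, since the class of $d$ modulo $n-1$ must simultaneously pin down the admissible rotation index $k$ (recall that by \cref{orbit-fn} the action of $F_n$ on the $n$-adics is not transitive, which is precisely the phenomenon absent in the case $n=2$) and the intermediate target $p$ within the interval containing $a$. Verifying that a single glide $\gamma_{m,j}$ realizes the passage from $p$ to $\frac kn$ requires the explicit description of how glides act on the level-two breakpoints of $\tree[m]$, and this is the only place where the combinatorics genuinely differs from the proof of \cref{nice-t}.
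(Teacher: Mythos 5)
Your proof is correct and follows essentially the same route as the paper: you factor $f$ as an element of $F_n$ composed with $\rho^{-k} \circ \gamma \circ v$, where $v$ is supported on the interval containing $f^{-1}(0)$ and moves that point to a level-two breakpoint, using \cref{orbit-fn} for the residue bookkeeping and the undistortion of $F_n$ in $T_n$ for the length bounds, exactly as in the paper's proof. The only slip is the claim that any $j$ ``smaller than $m+t-1$'' makes $\gamma_{m,j}$ send $p$ to a level-one point: for $n>2$ an intermediate $j$ with $m+t-n < j \leq m+t-2$ sends $p$ to a level-two breakpoint of $\tree[j]$, so one actually needs $j \geq m+t$ or $j \leq m+t-n$; since the paper's choices $j=n$ and $j=1$ (respectively) always satisfy this, your construction goes through unchanged.
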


\begin{proof}
	If $ f \in F_n $, then the result follows from the fact that $ F_n $ is undistorted in $ T_n $ \cite{Sheng}.

	Otherwise, let $ f^{-1}(0) =: \frac{a}n + \frac b{n^\ell} \neq 0 $, with $ \frac b{n^\ell} < \frac1n $, and let $k$ be the unique integer in $ \range{n-1} $ so that $ k \equiv a + b \pmod{n-1} $. If $b=0$ we are done as $ f= f' \circ \rho^{-a} $ for some $ f' \in F_n $, so assume $ b \neq 0 $.

	Let $f_{a+1} \in \Fn[ a+1 ] $ be a function sending $\frac{a}n + \frac b{n^\ell}$ to $ \frac an + \frac{c}{n^2} $, where $ c \in \range{n-1} $ with $ b \equiv c \pmod{n-1} $. We now choose a glide $\gamma$ that sends $ \frac an + \frac{c}{n^2} $ to $ \frac kn $; one possible choice is
	\begin{itemize}
		\item either $ \gamma_{a+1,n} $, if $ a+c = k $,
		\item or $ \gamma_{a+1,1} $ if $ a+c=k+n-1 $.
	\end{itemize}

	This can be checked by looking at tree diagrams for $ \gamma_{k,1} $ and $ \gamma_{k,n} $.

	In conclusion, the map $ f \circ f_{a+1}^{-1} \circ \gamma^{-1} \circ \rho^{k} $ stabilizes the point $0$, so it is equal to some $ f' \in \Fn $.

	So we have obtained
	\[
		f = f' \circ \rho^{-k} \circ \gamma \circ f_{a+1},
	\]
	and we conclude by choosing representatives for the elements $ f_{a+1} \in \Fn[a+1], f' \in \Fn $.

	The estimate follows from the fact that we can choose $f_{a+1}$ with small norm thanks to \cref{orbit-fn}, and from the fact that $ F_n $ is undistorted inside $T_n$ \cite[Theorem 3.7]{Sheng}.
\end{proof}

As done for $T$, since every $ f \in T $ is represented by a word of linear length in $ \norm f_T $ containing at most one power of $ \rho $, by applying \cref{triangles} we only need to check that the words containing at most three powers of $ \rho $ have quadratic area.

\begin{proposition}\label{area-bigons-tn}
	Let \[
		w=\rho^{-k_1} \cdot  w_1(S_{F_n}) \cdot  \rho^{k_2} \cdot  w_2(S_{F_n})
	\]
	be a null-homotopic word, where $ k_1,k_2 \in \range[0]{n-1}$. Then $ k_1=k_2 $ and $ \Area(w) $ is quadratic.
\end{proposition}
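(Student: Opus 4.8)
The plan is to treat the two assertions separately: first pin down $k_1=k_2$ from the dynamics at the basepoint, and then eliminate the two powers of $\rho$ by shuffling, reducing $w$ to a null-homotopic word in $S_{F_n}$ and invoking the quadratic Dehn function of $F_n$.

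For the equality $k_1=k_2$, I would evaluate the homeomorphism represented by $w$ at the basepoint $0$. Since $w$ is null-homotopic it fixes $0$, and since $w_2 \in F_n$ fixes $0$ while $\rho^{k_2}(0)=\frac{k_2}{n}$, reading the word from right to left gives $w_1(\frac{k_2}{n})=\frac{k_1}{n}$. If $k_2=0$ then $w_1(0)=\frac{k_1}{n}$, and as $w_1$ fixes $0$ this forces $k_1=0$; symmetrically $k_1=0$ forces $k_2=0$ by injectivity of $w_1$ at $0$. Otherwise $\frac{k_1}{n},\frac{k_2}{n}\in(0,1)$ and \cref{orbit-fn}, applied to $w_1\in F_n$ sending $\frac{k_2}{n}$ to $\frac{k_1}{n}$, yields $k_1\equiv k_2\pmod{n-1}$; since $k_1,k_2\in\{1,\dots,n-1\}$ this gives $k_1=k_2$. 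Write $k:=k_1=k_2$ from now on.

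For the area bound, the first observation is that $g:=\rho^{-k}w_1\rho^k$ represents an element of $F_n$: indeed $w$ being null-homotopic gives $\rho^{-k}w_1\rho^k=w_2^{-1}$, and $w_2$ is a word in $S_{F_n}$. My strategy is to rewrite the prefix $\rho^{-k}w_1\rho^k$ as a word in $S_{F_n}$ representing $g$ at linear area cost, after which $w$ becomes a $\rho$-free null-homotopic word of length $O(\abs{w})$ to which Zhang's theorem \cite{Zhang} applies. Concretely I would first replace $w_1$ by its normal form from \cref{normal-form-fn}, at quadratic cost in $\abs{w_1}$, so that $w_1$ contains at most one occurrence of each glide $\gamma_i^{\pm1}$ and hence $O(1)$ glides in total, the remaining letters being generators of the sectors $\Fn[j]$. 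I then push $\rho^{-k}$ rightwards through $w_1$ towards $\rho^k$ using the shuffle relations: a sector generator is carried across cleanly since $\rho s_{j,i}\rho^{-1}=s_{j+1,i}$ (indices modulo $n$, using $\rho^n=1$), at cost $O(1)$ per letter and without creating new powers of $\rho$; a glide $\gamma_i$ is carried across using parts (1)--(3) of \cref{tn-rels}, cleanly via $\rho\gamma_i\rho^{-1}=\gamma_{i+1}$ except in the boundedly many wrap-around cases, where \cref{tn-rel-3} is used and reintroduces a bounded number of powers of $\rho$. Since there are only $O(1)$ glides, this produces a word of length $O(\abs{w_1})$ representing $g$ with only $O(1)$ residual occurrences of $\rho$, at total area $O(\abs{w_1})$.

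It remains to clear the $O(1)$ residual powers of $\rho$. Because $g$ fixes $0$, the total $\rho$-exponent of this word is divisible by $n$; I would bring a canceling pair of powers of $\rho$ together by commuting one of them across the intervening letters --- again clean across sector generators and involving only the $O(1)$ glides --- and cancel using $\rho^n=1$. Each such pass costs $O(\abs{w})$ and there are $O(1)$ of them, so this step is $O(\abs{w})$. The result is a $\rho$-free word $u\cdot w_2$ with $u$ representing $g$; since it represents $g\cdot g^{-1}=1$ and lies in $S_{F_n}$, it is null-homotopic in $F_n$ of length $O(\abs{w})$, so by the quadratic Dehn function of $F_n$ its area is $O(\abs{w}^2)$, and as each relation of $F_n$ is a consequence of boundedly many relations of $T_n$ the same bound holds in $T_n$. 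Summing the contributions gives $\Area(w)=O(\abs{w}^2)$. The main obstacle is the glide wrap-around: conjugating glides by $\rho$ is not length-preserving and reintroduces rotations through \cref{tn-rel-3}, so the whole argument hinges on first bounding the number of glides via \cref{normal-form-fn} and then checking that clearing the resulting $O(1)$ stray rotations stays within linear cost.
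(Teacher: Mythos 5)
Your proof is correct in outline and follows essentially the same route as the paper: establish $k_1=k_2$ by evaluating at $0$ and applying \cref{orbit-fn}, put $w_1$ into the normal form of \cref{normal-form-fn} at quadratic cost, commute the rotation through the resulting word at linear cost, and finish with Zhang's quadratic bound for $F_n$. The one place you diverge is in how the rotation is carried past the glides, and it is also the one place where your argument is under-justified. The paper's observation is that after normal-forming, \emph{every} letter $s$ of the new word fixes $\frac kn$ (the only generator not fixing $\frac kn$ is $\gamma_k$, which is absent because $f_1$ fixes $\frac kn$); hence $\rho^{-k}s\rho^{k}$ lies in $F_n$ for each such $s$, and since $s$ ranges over a finite set and $k\leq n-1$, each identity $s\rho^{k}=\rho^{k}s'$ (with $s'$ a fixed word in $S_{F_n}$) is a single true relation of bounded length and bounded area. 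So one conjugates by $\rho^k$ in one shot, letter by letter, at $O(1)$ cost each, and no residual rotations ever appear. By instead conjugating one power of $\rho$ at a time you run into the wrap-around relation \cref{tn-rel-3}, which reintroduces a $\rho^{-1}$, and your subsequent clean-up pass is the weak point: commuting a residual $\rho^{\pm1}$ back across a glide can itself trigger \cref{tn-rel-3} again and spawn a new rotation, so the claim that ``$O(1)$ passes'' suffice needs an actual termination argument (e.g.\ tracking how the glide indices move) rather than an assertion. This complication is entirely avoidable via the paper's one-shot conjugation, which is worth adopting; with that change your argument coincides with the paper's. Your treatment of the $k_1=k_2$ step, including the degenerate case $k_1k_2=0$, is fine and slightly more detailed than the paper's.
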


\begin{proof}
	If $ k_1=k_2=0 $, the result follows from the fact that $ F_n $ \cite{Zhang} has quadratic Dehn function. Otherwise, both $k_1$ and $k_2 $ must be nonzero, or the resulting word could not have $0$ as a fixed point (and would not be null-homotopic).

	Let $f_1$ be the element of $F_{n}$ represented by $w_1$. Since $w$ is null-homotopic, the map $ \rho^{-k_1} f_1 \rho^{k_2} $ belongs to $ F_n $, so in particular $ f_1(\frac {k_2}n) = \frac{k_1}n $, from which we get by \cref{orbit-fn} that $ k_1=k_2 \eqqcolon k $.

	By \cite{Zhang}, we can replace $w_1$ with a word $w_3$ of the form given by \cref{normal-form-fn} using a quadratic amount of relations. Since $ f_1 $ fixes $ \frac kn $, the word $w_3$ does not contain the generator $ \gamma_k^{\pm 1} $, i.e.~every letter of $w_3$ fixes the point $ \frac kn $.

	So for every letter $s$ of $w_3$, the conjugate $ \rho^{-k} \cdot s \cdot \rho^k $ is still an element of $F_n$. So we can shift the $ \rho^k $ to the left, using that for every letter $s$ in $w_3$ then $ s \rho^k = \rho^k s' $ for some $ s' \in F_n $.

	After using a number of relations that is linear in $w_3$, the powers of $ \rho $ cancel and we are left with a word in $S_{F_n}$ that has quadratic area by \cite{Zhang}.
\end{proof}

\begin{proposition}\label{area-triangles-tn}
	Let \[
		w=\rho^{k_1} \cdot w_1(S_F) \cdot \rho^{k_2} \cdot w_2(S_F) \cdot \rho^{k_3} \cdot w_3 (S_F)
	\]
	be a null-homotopic word, where $ k_1,k_2,k_3 \in \range{n-1} $. Then $ \Area(w) $ is quadratic.
\end{proposition}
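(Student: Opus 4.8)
The plan is to reduce this triangular case to the bigon case \cref{area-bigons-tn} by eliminating one power of $\rho$, mirroring the three-$C_0$ argument for $T$ in which the relation $(C_0X_0)^3=1$ was used to turn $C_0X_0C_0$ into $X_0^{-1}C_0X_0^{-1}$; here the analogous role is played by the rotation relation \cref{tn-rel-3}. First I would rewrite the middle word $w_2$ in the normal form of \cref{normal-form-fn}. Since $F_n$ has quadratic Dehn function by \cite{Zhang}, this costs at most quadratically many relations in $\abs{w_2}$, hence in $\abs w$. After this step $w_2$ is a product of at most $n-1$ glides $\gamma_k^{\pm1}$ interspersed with elements of the subgroups $\Fn[i]$, each of which fixes every point $\frac jn$.

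Next I would push every $\Fn[i]$-generator out of the central block. Such a generator moves past a power of $\rho$ by the relations $\rho\Fn[i]\rho^{-1}=\Fn[i+1]$ of \cref{tn-rels}, at the cost of replacing it by a conjugate in another factor $\Fn[j]$; it moves past a glide using \cref{tn-rel-6,tn-rel-7} (and disjoint supports otherwise), since the glides normalise the relevant $\Fn[i]$. Each move uses one relation, and as there are at most $n-1$ glides this transport costs only linearly many relations in $\abs{w_2}$. Absorbing the transported generators into the outer words, the central portion becomes $\rho^{k_2}\cdot\Gamma\cdot\rho^{k_3}$ with $\Gamma$ a product of at most $n-1$ glides.

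The crucial step is then to collapse this into two $\rho$-blocks. Transporting the flanking powers of $\rho$ across the glides of $\Gamma$ via $\rho\gamma_i\rho^{-1}=\gamma_{i+1}$ merely shifts the glide indices, and the rotation relation \cref{tn-rel-3} absorbs the index wraparound, collapsing a configuration $\rho^{-k}\gamma_k^{-1}\rho^{k}$ into a single power of $\rho$. Because both the number of glides and the exponents $k_i$ are bounded by $n-1$, this glide-and-rotation juggling costs only a constant number of relations, independent of $\abs w$. The resulting word is still null-homotopic and contains only two powers of $\rho$, so \cref{area-bigons-tn} concludes that its area is quadratic.

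I expect this last step to be the main obstacle. When $n=2$ the middle element carries the single glide $X_0$ and the one relation $(C_0X_0)^3=1$ suffices, whereas for general $n$ one must control a product of several glides of different indices, track how conjugation by $\rho$ permutes the factors $\Fn[i]$ and increments glide indices, and apply \cref{tn-rel-3} in exactly the right place to cancel a power of $\rho$. Carrying out this bookkeeping precisely --- while keeping it constant-cost, so that the only quadratic contribution is the initial normal-form rewriting via \cite{Zhang} --- is where the real work lies.
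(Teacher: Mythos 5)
Your overall strategy is the right one --- confine everything that cannot be handled by \cref{area-bigons-tn} to a central block of \emph{constant} length, then dispose of that block at constant cost --- but you execute it differently from the paper and, more importantly, you leave the decisive step unresolved. The paper applies the $T_n$ normal form (\cref{normal-form-tn}) to $\rho^{k_1}w_1$, which produces a \emph{single} glide $\gamma$ and a tail $w_5(S_m)$ supported in one factor $\Fn[m]$; after shifting $w_5$ past $\rho^{k_2}$ the central block is $\rho^{k_1}\gamma\rho^{k_2}$, of length at most $3n$. You instead normalize $w_2$ via \cref{normal-form-fn}, which leaves up to $n-1$ glides interleaved with $\Fn[i]$-words, and after transporting the $\Fn[i]$-letters outward you are left with $\rho^{k_2}\Gamma\rho^{k_3}$ where $\Gamma$ is a product of up to $n-1$ glides. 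Your transport step is essentially fine (though ``each move uses one relation'' should be ``a bounded number'': $\gamma_i^{-1}s\gamma_i$ is an element of $\Fn[i]$, not a generator, so it must be re-expanded as a word of bounded length before meeting the next glide; with at most $n-1$ glides this still gives a linear total cost).

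The gap is the collapsing of $\rho^{k_2}\Gamma\rho^{k_3}$, which you explicitly flag as ``where the real work lies'' without carrying it out; as written the proof is incomplete precisely there. But the explicit glide-and-rotation juggling you attempt is unnecessary, and the tool you need is one you already cite: $\rho^{k_2}\Gamma\rho^{k_3}$ ranges over a \emph{finite} set of words (each $\gamma_j^{\pm1}$ appears at most once in $\Gamma$ and $k_2,k_3\in\range{n-1}$), each of length at most $3(n-1)$. For each such word, \cref{normal-form-tn} supplies a representative of the same element of $T_n$ of the form $w_6(S_{F_n})\,\rho^h\,w_7(S_{F_n})$, and rewriting one fixed word into another fixed representative of the same element costs some fixed number of relations; taking the maximum over the finitely many cases gives a constant independent of $\abs w$. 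This is exactly how the paper handles its (simpler, one-glide) block $\rho^{k_1}\gamma\rho^{k_2}$. With that observation inserted, your word has two powers of $\rho$ and \cref{area-bigons-tn} finishes the proof, so your route closes --- but you should either make this finiteness argument explicit or switch to the paper's choice of normalizing $\rho^{k_1}w_1$ in $T_n$, which avoids the multi-glide block altogether.
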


\begin{proof}
	By \cref{normal-form-tn}, we can rewrite $ \rho^{k_1} \cdot w_1(S_{F_n}) $ as $ w_4(S_{F_n}) \cdot \rho^{k_1} \cdot \gamma \cdot  w_5(S_{m}) $. This can be done with a quadratic amount of relations by \cref{area-bigons-tn}.

	Now we shift $w_5$ to the right of $ \rho^{k_2} $, obtaining
	\[
		w_4(S_{F_n}) \cdot \rho^{k_1} \cdot \gamma \cdot \rho^{k_2} \cdot w_5(S_{m-k}) \cdot w_2(S_F) \cdot \rho^{k_3} \cdot w_3(S_F),
	\]
	where the index $ m-k $ is to be intended mod $ n $.

	Now by \cref{normal-form-tn} we can replace $ \rho^{k_1} \gamma \rho^{k_2} $, that has length at most $3n$ (a constant), by some word $ w_6(S_{\Fn}) \rho^h w_7(S_{\Fn}) $; the number of relations we need is bounded above by a constant that does not depend on $w$. After doing this, we conclude by the previous lemma.
\end{proof}

We now apply \cref{triangles} where $W_g$ is defined as the words representing $ g \in T_n $ of the form given by \cref{normal-form-tn}; the area of all triangular diagrams is quadratic by \cref{area-bigons-tn,area-triangles-tn}. The proof of \cref{tn-dehn-quadratic} is complete.

\section{Final remarks}\label{section-remarks}

Higman and Brown defined more generally groups $ F_{n,r}, T_{n,r} $ \cite{BrownFiniteness} and $ V_{n,r} $ \cite{Higman} for $ n \geq 2 $ and $ r \geq 1 $. Brown shows that $ F_{n,r} \isom F_n $ for all $ r \geq 1 $, but the same cannot hold for $ T_{n,r} $, for example because the order of torsion elements in $T_n$ satisfies some restrictions \cite{Sheng} while in $T_{n,r}$ there is always a torsion element of order $r$.

It is very likely that the same techniques can be used to prove that the Dehn function of $ T_{n,r} $ is quadratic; however, one should make sure that all the technical lemmas involving distortion still hold in the more general case. In particular, we could not find a reference in the literature proving that $ F_{n,r} $ is undistorted in $ T_{n,r} $.

On the other hand, the situation seems intrinsically more difficult when trying to prove that the Dehn function of $V$ is quadratic. First, unlike what happens for $F$ and $T$, the word metric cannot be estimated as nicely in terms of the reduced tree diagram: the best result one can get \cite{Birget} is that
\[
	\frac 1C N(f) \leq \norm f_V \leq C N(f) \log N(f),
\]
where $N(f)$ denotes the number of carets for a reduced tree diagram for $f$. The second obstacle is that, while elements of $T$ can be expressed by a word containing at most one instance of $C_0$, a similar result cannot hold for $V$, as the number of discontinuities for a function $f \in V$ gives a lower bound for the number of generators not belonging to $T$ in a representative of $f$.

\bibliographystyle{alpha}
\bibliography{references}

\end{document}